\newtheorem{theorem}{Theorem}[section]
\newtheorem{definition}[theorem]{Definition}
\newtheorem{remark}[theorem]{Remark}
\newtheorem{lemma}[theorem]{Lemma}
\newtheorem{corollary}[theorem]{Corollary}
\newcommand{\Mat}{\operatorname{Mat}}
\newcommand{\Hom}{\operatorname{Hom}}
\newcommand{\coker}{\operatorname{coker}}
\newcommand{\Span}{\operatorname{span}}
\newcommand{\rank}{\operatorname{rank}}
\newcommand{\codim}{\operatorname{codim}}
\newcommand{\D}{\operatorname{d}}
\newcommand{\CC}{\mathbb{C}}
\newcommand{\ZZ}{\mathbb{Z}}
\newcommand{\RR}{\mathbb{R}}
\newcommand{\PP}{\mathbb{P}}
\newcommand{\Sing}{\operatorname{Sing}}
\title{Vanishing cycles of smoothable isolated Cohen-Macaulay codimension $2$ singularities of type $2$}
\author{Matthias Zach\\
        Institut f. Alg. Geometrie, Leibniz Universt\"at Hannover, Germany}
\begin{document}

\maketitle

\begin{abstract}
  \noindent
  We extend the results from the previous paper by 
  A. Fr\"uhbis-Kr\"uger and the author \cite{FKZ15} to the vanishing topology of 
  those singularities in the title. 
  Studying the case of possibly non-isolated singularities in the Tjurina-transform,
  we reveal that in dimension $3$ and $2$ 
  there always is exactly one special vanishing cycle in degree $2$ 
  closely related to the determinantal structure of the singularity. 
\end{abstract}

\vspace{0.5cm}
The results of this paper give a detailed insight in the vanishing topology 
of a certain class of isolated determinantal singularities: Those given by the 
maximal minors of $3\times 2$ matrices with analytic entries. 
It turns out that the homology of the Milnor fiber reflects the determinantal 
structure, see theorem \ref{thm:MainTheorem}. We use the Tjurina modification and 
its compatibility with deformations as introduced in \cite{FKZ15} and combine 
it with results about the vanishing topology of non-isolated complete intersection
singularities. The latter are obtained as generalizations and adaptions of the 
work by D. Siersma, M. Tibar and Y. Yomdin 
\cite{Siersma83}, \cite{Siersma91}, \cite{ST15}, \cite{Iomdin74}. 
While computations of further examples beyond Cohen-Macaulay type $2$ yield 
that the observed phenomena still hold true for determinantal singularities 
defined by bigger matrices, the methods applied in this paper seem to be 
rather exhausted. 

The article is structured as follows. First we will review known facts for 
isolated Cohen-Macaulay codimension 2 singularities and the Tjurina modification 
in section 1. After stating the main theorem of this paper, we give an example 
to sketch the structure of the proof. In section 2 we develop the necessary 
results for the local complete intersection line singularities with a view 
towards their applications for determinantal ones. This will all be put 
together in section 3, where we compute the homology groups for the 
deformed Tjurina transform in a ``generic rank 1 perturbation'' and finally 
for the Milnor fiber of any Cohen-Macaulay codimension $2$ singularity of 
Cohen-Macaulay type $t=2$ and dimension $2$ or $3$.

The author wishes to thank A. Fr\"uhbis-Kr\"uger for guidance and support, M. Tibar 
for discussions during a visit in Hannover and the organization of the workshop 
on nonisolated singularities in Lille and D. Siersma for further discussions on the 
topic. Furthermore, M.A.S. Ruas and the ICMC 
for hospitality and a stimulating mathematical framework during a stay at 
the USP in S\~ao Carlos. Thanks also for conversations with T. Gaffney and 
M.A.S. Ruas on determinantal singularities, which significantly 
broadened the viewpoint of this paper.

\section{Known facts and techniques} 

\begin{definition}
  An isolated Cohen-Macaulay codimension $2$ (ICMC2) singularity 
  $(X,0) \subset (\CC^N,0)$ is, as the name yields, the germ of an analytic Cohen-Macaulay 
  scheme of codimension $2$ in $\CC^N$ such that any representative 
  $X$ of $(X,0)$ is smooth in a punctured neighborhood of the origin.
  The Cohen-Macaulay type of $(X,0)$ can be defined as 
  \[
    t := (\min \# \textnormal{generators of } I(X)) - 1,
  \]
  where $I(X)$ is the ideal in $\CC\{\underline x\}$ 
  associated to $(X,0)$ at $0$.
\end{definition}

The Hilbert Burch theorem says that being Cohen-Macaulay 
of codimension $2$ is equivalent 
to the fact that $I=I(X)$ is given by the $t$-minors of the 
syzygy-matrix $A$ of $I$, see \cite{Bur}. This matrix 
is always of the form 
\[
  A \in \Mat(t,t+1; \CC\{\underline x\}).
\]
Furthermore Schaps \cite{Sch} showed that any deformation of $(X,0)$, 
i.e. an embedding of $(X,0)$ in a flat family, is described as a 
perturbation of this matrix $A$. This means the minors 
of the perturbed matrix, whose entries now also depend on the 
deformation parameters, define the total space of the deformation.

In particular this means that every Cohen-Macaulay singularity is 
a \textit{determinantal singularity} of type $(t,t+1,t)$ in the following sense.

\begin{definition}
  A germ $(X,0) \subset (\CC^N,0)$ is called a determinantal singularity 
  of type $(m,n,s)$, 
  if there exists a holomorphic map germ 
  \[
    A : (\CC^N,0) \to (\Mat(m,n; \CC), 0)
  \]
  such that $(X,0)$ appears as the preimage of the 
  \textit{generic determinantal variety} 
  \[
    M_{m,n}^s := \{ B \in \Mat(m,n; \CC) : \rank B < s \} \subset \Mat(m,n;\CC)
  \]
  under this map
  \[ 
    (X,0) = A^{-1}( M_{m,n}^s, 0 ),
  \]
  and $(X,0)$ has 
  \textit{expected codimension} 
  $\codim (X,0) = \codim M_{m,n}^s$.
\end{definition}

\noindent
By definition a deformation of a determinantal singularity comes from a perturbation of 
its matrix $A$ as a map germ. The condition on $(X,0)$ to have expected 
codimension assures that the induced family for the singularity is flat.
Thus also the notions of deformation for (isolated) CMC2 and matrix 
singularities agree. 

If a determinantal singularity $(X,0)$ is isolated, it also automatically is an 
\textit{EIDS} in the sense of Ebeling and Gusein-Zade \cite{EGZ09}.
Recall that the varieties 
\[
  \{0\} = M_{m,n}^0 \subset M_{m,n}^1 \subset \cdots \subset M_{m,n}^{\min\{m,n\}} 
  \subset \Mat(m,n; \CC )
\]
give a canonical Whitney stratification of the space of $(m\times n)$-matrices.

\begin{definition}
  A determinantal singularity $(X,0)\subset (\CC^N,0)$ given by a matrix 
  $A: (\CC^N,0) \to M_{m,n}$ as $A^{-1}(M_{m,n}^s)$ is an \textit{essentially isolated 
  determinantal singularity} (EIDS), if the map $A$ is transverse 
  to all strata of $M_{m,n}^s$ in a punctured neighborhood of the origin.
\end{definition}

In general, determinantal singularities do not admit smoothings, but only stabilizations. 
These are deformations coming from a perturbations $A_\varepsilon$ of the defining matrix
such that considered as a map, $A_\varepsilon$ 
is transversal to all strata 
$M_{m,n}^s$ of $\Mat(m,n;\CC)$. 
From this it is easy to see that a determinantal singularity
$(X,0) \subset (\CC^N,0)$ of type $(m,n,s)$ admits a smoothing, if and only if 
\[
  N < \codim M_{m,n}^{s-1},
\]
so that we have enough degrees of freedom to move the image of $A$ away from the 
lower dimensional strata.
\begin{definition}
  Let $0 \in B \subset \CC^N$ be a Milnor ball for a representative $X$ of 
  a smoothable isolated determinantal singularity $(X,0)\subset (\CC^N,0)$ given by a matrix 
  $A : B \to M_{m,n}$ as $X = A^{-1}(M_{m,n}^s)$. Let 
  \[
    A_\varepsilon : B \to M_{m,n}
  \]
  be a stabilization of $A$. The space $X_\varepsilon = A_\varepsilon^{-1}(M_{m,n}^s)$ 
  is the \textit{Milnor fiber} of $(X,0)$.
  The generators of the homology groups $H_i(X_\varepsilon)$ are the 
  \textit{vanishing cycles} and the reduced Euler characteristic 
  $\overline \chi(X_\varepsilon)$ the \textit{vanishing Euler characteristic} of 
  $(X,0)$.
  \label{def:SittingOver}
\end{definition}

\noindent
Throughout this paper we will only use homology and cohomology with 
integer coefficients. Hence we will omit them from the notation and just 
write $H_q(X)$ for $H_q(X;\ZZ)$ and vice versa in cohomology.

One can use the theory of versal unfoldings for the map germ $A$ to show 
that the diffeomorphism type of the Milnor fiber is unique. Thus the 
homology groups of $X_\varepsilon$ and its invariants are in fact invariants 
of the singularity $(X,0)$ itself. 

\begin{remark}
  Let $(X,0) \subset (\CC^{n+d},0)$ be an isolated complete intersection 
  singularity (ICIS) of codimension $d$.
  $(X,0)$ can be seen as a determinantal singularity of type $(d,1,1)$. 
  It is known \cite{Hamm72} that its Milnor fiber $X_\varepsilon$ is homotopic to a 
  bouquet of spheres 
  \[
    X_\varepsilon \cong S^n \vee \dots \vee S^n
  \]
  of real dimension $n$. Hence besides $b_0$ 
  there is only the middle Betti number $b_n(X_\varepsilon)$, which is nonzero. 
  It is also known as the \textit{Milnor number} $\mu(X,0)$ of the 
  singularity.
\end{remark}

Greuel and Steenbrink showed in \cite{GS} that the Betti numbers $b_i$ 
of the Milnor fiber $X_\varepsilon$ 
of any smoothable isolated singularity $(X,0) \subset (\CC^N,0)$ must 
be zero in the range $0<i\leq\dim X - \codim X$. 
For a smoothable ICMC2 singularity of dimension $n=\dim (X,0) \geq 2$ 
this means that
there are two possibly nonzero 
Betti numbers $b_n$ and $b_{n-1}$ of the Milnor fiber $X_\varepsilon$. 
However it is also shown in \cite{GS}, that for surfaces
$b_1$ is always zero.
It is an open question, whether the first homology group is really 
zero or if there is torsion, but in general the vanishing cycles for 
ICMC2 surface singularities were not expected to behave very different from 
the complete intersection case. 

For threefolds there are examples, 
for which this is no longer the case, as was first observed by 
James Damon and Brian Pike in \cite{DP2}. 
Using Macaulay2, they computed the reduced Euler characteristic 
\[
  -\overline \chi(X_\varepsilon) = b_3 - b_2
\]
of the Milnor fiber 
and showed that it is negative for certain examples 
from the list of simple ICMC2 singularities in \cite{FN}. 

Methods for the computation of the Euler characteristic have also been 
developed in \cite{EGZ09}, \cite{GafRua16} and \cite{NOT}. 
Those using polar multiplicities can also be effectively implemented 
in Singular to determine the Euler characteristic of given examples.
However neither of the methods is suited to compute the two Betti numbers 
or even the distinct homology groups independently.

This was first done by Anne Fr\"uhbis Kr\"uger and the author in \cite{FKZ15} 
for the simple ICMC2 threefolds. 
The key tool in that paper was the \textit{Tjurina modification}, which we will 
briefly review in the next section. Although the original intention in 
\cite{FKZ15} was to explain the vanishing topology of threefolds, the ideas 
and techniques are also applicable in any other dimension greater than zero 
and beyond the ICMC2 case for determinantal singularities in general, see e.g. 
\cite{Helge16}.

\subsection{Tjurina transform for ICMC2 singularities}

Let $(X_0,0)\subset (\CC^N,0)$ be a determinantal singularity 
of type $(m,n,t)$ given by the matrix 
$A \in \Mat(m,n;\CC\{\underline x\})$. Consider $A$ as a map germ 
\[
  A : (\CC^N, 0) \to (M_{m,n},0).
\]
The generic determinantal variety $M_{m,n}^t$ is singular along the 
set $M_{m,n}^{t-1}$. Let 
\[
  L : M_{m,n}^{t} \dashrightarrow \operatorname{Gr}(t-1,n),  
  \quad B \mapsto \Span(B^T)
\]
be the rational map to the Grassmannian of $(t-1)$-dimensional 
subspaces of $\CC^m$, mapping a matrix $B$ to the plane spanned by the 
image of its transpose. It is well defined on $M_{m,n}^t$ away from 
its singular locus $M_{m,n}^{t-1}$. 
The blowup of $L$ 
\[
  \hat M_{m,n}^{t} := \overline{\Gamma_L(M_{m,n}^t\setminus M_{m,n}^{t-1})} 
  \subset \Mat(m,n; \CC) \times \operatorname{Gr}(t-1,n)
\]
is as usual defined to be the closure of the graph 
$\Gamma_L$ of $L$ over the regular part. 
It is a resolution 
\[
  \rho : \hat M_{m,n}^{t} \to M_{m,n}^t
\]
of $M_{m,n}^t$ with exceptional locus $ \operatorname{Gr}(t-1,n)$ over 
every point of $M_{m,n}^{t-1}$.

\begin{definition}
  For a determinantal singularity $(X_0,0) \subset (\CC^N,0)$ 
  of type $(m,n,t)$ given by a matrix $A$, the Tjurina transform 
  \[
    (Y_0,V) \subset 
    (\CC^N \times \operatorname{Gr}(t-1,n), \{0\} \times \operatorname{Gr}(t-1,n))
  \]
  is defined as the fiber product 
  \[
    \xymatrix{
      X_0 \times_{M_{m,n}^t} \hat M_{m,n}^t \ar[r] \ar[d]_\pi & 
      \hat M_{m,n}^t \ar[dr]^{\hat L} \ar[d]_\rho & \\
      X_0 \ar[r]^A & 
      M_{m,n}^t \ar@{-->}[r]^L & 
      \operatorname{Gr}(t-1,n)
    }
  \]
\end{definition}

For ICMC2 singularities the target Grassmannian is always 
\[
  \operatorname{Gr}(t-1,t) \cong \PP^{t-1},
\]
where we identify a $(t-1)$-plane with the class of its normal vector in 
the dual space.
The equations for the Tjurina transform take a surprisingly simple form, see 
\cite{FKZ15}, corollary 3.3. Let 
$\underline x = (x_1,\dots,x_N)$ be the local coordinates of $\CC^N$ and 
$(s_1:\dots:s_t)$ the homogeneous coordinates of $\PP^{(t-1)}$.
If $A = (a_{i,j})_{i,j}$ was the matrix describing $(X_0,0)$, 
then $Y_0$ is the zero locus of the equations
\begin{equation}
  \begin{pmatrix}
    f_0 \\ \vdots \\ f_t
  \end{pmatrix} = 
  \begin{pmatrix}
    a_{1,1}(\underline x)& \cdots & a_{1,t}(\underline x)\\
    \vdots & & \vdots \\
    a_{t+1,1}(\underline x)& \cdots & a_{t+1,t}(\underline x)\\
  \end{pmatrix}
  \cdot 
  \begin{pmatrix}
    s_0 \\ \vdots \\ s_t
  \end{pmatrix}
  = 
  \begin{pmatrix}
    0 \\ \vdots \\ 0
  \end{pmatrix}
  \label{eqn:EquationsTjurinaTransform}
\end{equation}
in $\CC^N\times \PP^{(t-1)}$.
The excetional set $V$ of the projection $\pi : Y_0 \to X_0$ is by this construction 
always the 
whole $\PP^{(t-1)}$. Hence $Y_0$ might not be equidimensional if the Cohen-Macaulay 
type of $(X_0,0)$ was too big. 
However counting dimensions one easily verifies that $Y_0$ 
is a local complete intersection iff $\dim (X_0,0) \geq t-1$.

Because any deformation of an ICMC2 singularity $(X_0,0)$ comes from a perturbation 
of the defining matrix, Tjurina modification is also applicable in family. 
This basically means, we take the Tjurina transform $Y$ of a representative 
$X$ of the total space
of a deformation of $(X_0,0)$, see \cite{FKZ15}, construction 3.6. 
Thus every deformation of $(X_0,0)$ over a base $B$ canonically induces a family 
\[
  \xymatrix{
    Y_0 \ar@{^(->}[r] \ar[d]_{\pi_0} & 
    Y \ar[d]^\pi \\
    X_0 \ar@{^(->}[r] \ar[d] & 
    X \ar[d] \ar[d]^\varepsilon \\
    \{0\} \ar@{^(->}[r] &
    B
  }
\]
with the Tjurina transform $Y_0$ as a special fiber. 
If $\dim (X_0,0) \geq t-1$, i.e. if $Y_0$ is a local complete intersection, this 
family is automatically flat (\cite{FKZ15}, proposition 3.9).

In this setup, we can study deformations of 
$(X_0,0)$ via the deformations of $(Y_0,V)$ sitting over it. 
Moreover in a smoothing of $(X_0,0)$, the induced map 
\[
  \pi_\varepsilon : Y_\varepsilon \to X_\varepsilon
\]
on the fibers over $\varepsilon \neq 0$ is always an isomorphism 
(\cite{FKZ15}, proposition 3.8) -- a direct consequence of the fact 
that the matrix $A(x)$ cannot degenerate on smooth points $x \in X_\varepsilon$.

\subsection{Previously known results and main theorem}

The main theorem about the topology from  
\cite{FKZ15} can be summarized as follows:

\begin{theorem}{(\cite{FKZ15}, theorem 4.4)}
  Let $(X_0,0) \subset (\CC^5,0)$ be an ICMC2 threefold singularity 
  of Cohen-Macaulay type $2$ such that the 
  Tjurina transform $(Y_0,V)$ has at most isolated singularities. Then the 
  Betti numbers of the Milnor fiber $X_\varepsilon$ are given by 
  \[
    b_0 = 1, \quad
    b_1 = 0, \quad
    b_2 = 1, \quad 
    b_3 = r,
  \]
  where $r$ is the sum of the Milnor numbers of the ICIS in $Y_0$.
  \label{thm:MainTheoremLastPaper}
\end{theorem}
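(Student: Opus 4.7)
The plan is to carry the computation over to the Tjurina transform, where the exceptional divisor $V\cong\PP^1$ both explains the extra class in degree $2$ and makes the remaining singularities accessible. By proposition 3.8 of \cite{FKZ15}, $\pi_\varepsilon\colon Y_\varepsilon\to X_\varepsilon$ is a homeomorphism, so it suffices to compute $H_*(Y_\varepsilon)$. The dimension count following \eqref{eqn:EquationsTjurinaTransform} makes $Y_0$ a local complete intersection of dimension $3$ inside $\CC^5\times\PP^1$, smooth away from $V$ (since $\pi$ is an isomorphism over $X_0\setminus\{0\}$) and by hypothesis singular only in finitely many points $p_1,\dots,p_k\in V$. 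Each germ $(Y_0,p_i)$ is thus an ICIS of dimension $3$ whose local Milnor fibre $F_i\simeq\bigvee^{\mu_i}S^3$ is a bouquet of $3$-spheres by Hamm's theorem; I will write $r=\sum_i\mu_i$.

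The first step is to compute $H_*(Y_0)$. A Milnor representative of the germ $(X_0,0)$ is contractible, $\pi$ is proper with connected fibres, an isomorphism outside $\{0\}$, and its fibre over the origin is $\PP^1$. The Leray spectral sequence then collapses thanks to $R^0\pi_*\ZZ=\ZZ_{X_0}$ and $R^2\pi_*\ZZ$ being a skyscraper of value $\ZZ$ at the origin, yielding $H_*(Y_0)\cong H_*(\PP^1)=(\ZZ,0,\ZZ,0,\dots)$; geometrically, $Y_0$ even deformation retracts onto $V$.

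Next I compare $Y_0$ with $Y_\varepsilon$ by decomposing both along the singular set $\Sigma=\{p_1,\dots,p_k\}$. Since $Y\to B$ is flat (proposition 3.9 of \cite{FKZ15}) and $Y_0$ has only isolated singularities, Ehresmann away from $\Sigma$ together with the Milnor fibration at each $p_i$ presents $Y_\varepsilon$ as $Y_0\setminus\Sigma$ with the Milnor fibres $F_i$ glued in along the links $L_i$, while $Y_0$ is $Y_0\setminus\Sigma$ with the contractible cones $C_i$ glued in along the same $L_i$. The Milnor--Hamm connectivity theorem gives that each $L_i$ is $1$-connected, so $H_0(L_i)=\ZZ$ and $H_1(L_i)=0$. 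Writing the two Mayer--Vietoris sequences side by side, the terms $\bigoplus H_q(F_i)$ and $\bigoplus H_q(C_i)$ first differ in degree $3$, where the former equals $\ZZ^r$ and the latter vanishes. In degrees $\leq 2$ both sequences exhibit $H_q(\,\cdot\,)$ as the same cokernel of $\bigoplus H_q(L_i)\to H_q(Y_0\setminus\Sigma)$, hence $H_q(Y_\varepsilon)\cong H_q(Y_0)$ for $q\leq 2$, which gives $b_0=1$, $b_1=0$ and $b_2=1$. Finally, additivity of Euler characteristics over the two decompositions yields $\chi(Y_\varepsilon)-\chi(Y_0)=\sum_i\bigl(\chi(F_i)-\chi(C_i)\bigr)=\sum_i(-\mu_i)=-r$, so $\chi(Y_\varepsilon)=2-r$ and therefore $b_3=r$.

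The main obstacle is the identification $H_*(Y_0)\cong H_*(\PP^1)$: once this is in place, the remaining comparison is standard bookkeeping against the boundary link. Rigorously establishing the Leray collapse (equivalently, a deformation retraction of $Y_0$ onto $V$) requires some care with the fibres of $\pi$ near $V$, but that is the only genuinely geometric input; everything else relies on well-known connectivity results for ICIS germs and a side-by-side comparison of two Mayer--Vietoris sequences.
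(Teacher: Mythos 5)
Your argument is correct and follows essentially the same route as the paper's (sketched there for the surface case and asserted to carry over to threefolds): retract $Y_0$ onto $V\cong\PP^1$, identify $Y_\varepsilon$ with $X_\varepsilon$, and compare $Y_0$ with $Y_\varepsilon$ by cutting out Milnor balls around the isolated singular points and invoking Hamm's bouquet theorem --- your side-by-side Mayer--Vietoris bookkeeping is a repackaging of the paper's long exact sequence of the pair $(Y_\delta,\bigcup_i F_i)$ combined with excision. The only substantive difference is that you obtain $b_3=r$ from Euler-characteristic additivity, which tacitly requires $b_q(X_\varepsilon)=0$ for $q>3$ (true, since the Milnor fibre of a smoothing of an isolated $3$-dimensional singularity has the homotopy type of a CW complex of real dimension at most $3$, but worth stating), whereas the paper's pair sequence yields $H_3(Y_\varepsilon)\cong\bigoplus_i H_3(F_i)\cong\ZZ^r$ directly, which in addition exhibits the generators and the absence of relations among the local vanishing cycles.
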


\noindent
We will generalize this result in two directions. First of all 
the ideas and the proof for theorem \ref{thm:MainTheoremLastPaper} 
in \cite{FKZ15} carry over 
almost literally to the surface case.

\begin{theorem}
  Let $(X_0,0) \subset (\CC^4,0)$ be an ICMC2 surface singularity 
  of Cohen-Macaulay type $2$ such that the 
  Tjurina transform $(Y_0,V)$ has at most isolated singularities. 
  Then the Milnor fiber $X_\varepsilon$ is simply connected. 
  The second homology group splits into 
  \begin{equation}
    H_2(X_\varepsilon) \cong \ZZ^r \oplus \ZZ
    \label{eqn:SplittingSecondHomologyGroupSurfaces}
  \end{equation}
  where all cycles of the first summand come from the ICIS in 
  the Tjurina transform and $r$ is the sum of their Milnor numbers.
  \label{thm:MainTheoremLastPaperSurfaces}
\end{theorem}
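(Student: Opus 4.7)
The plan is to mimic the proof of Theorem \ref{thm:MainTheoremLastPaper} for threefolds, exploiting that $\pi_\varepsilon : Y_\varepsilon \to X_\varepsilon$ is an isomorphism on smooth fibers over $\varepsilon \neq 0$, so that computing $H_*(X_\varepsilon)$ reduces to computing $H_*(Y_\varepsilon)$. The stabilization $A_\varepsilon$ of $A$ will be factored through an intermediate ``generic rank $1$ perturbation'' $\tilde A_\eta$; the topology of the corresponding deformed Tjurina transform $\tilde Y_\eta$ is computed first, and then compared with $Y_\varepsilon$ by a further perturbation whose only new feature is a line singularity along a $\PP^1$ inside $\tilde Y_\eta$.

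From equations (\ref{eqn:EquationsTjurinaTransform}), $Y_0 \subset \CC^4 \times \PP^1$ is a $2$-dimensional local complete intersection containing $V \cong \PP^1$; under the hypothesis all singular points of $Y_0$ lie on $V$ and are ICIS, with $r = \sum_p \mu_p$ the sum of their Milnor numbers. In the intermediate stage $\tilde Y_\eta$ each of these isolated singularities is replaced by its local Milnor fiber, a wedge of $\mu_p$ copies of $S^2$, while a $\PP^1$ persists as the exceptional set. A Mayer--Vietoris decomposition of $\tilde Y_\eta$ into a tubular neighborhood of this $\PP^1$ (which retracts to $S^2$) and the local ICIS Milnor fibers, glued along circles bounding disks, yields $H_0(\tilde Y_\eta) = \ZZ$, $H_1(\tilde Y_\eta) = 0$, and $H_2(\tilde Y_\eta) = \ZZ^r \oplus \ZZ$, the last summand being the class of the $\PP^1$; simple connectedness of $\tilde Y_\eta$ follows simultaneously from van Kampen applied to the same decomposition.

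The further stabilization from $\tilde A_\eta$ to $A_\varepsilon$ is then analyzed locally along the $\PP^1$ using the machinery of section 2 for non-isolated line singularities: the resulting local Milnor fiber carries only the cycle coming from a deformation of the $\PP^1$ into a $2$-sphere, and no further vanishing classes appear. Combining with the previous step yields $H_*(Y_\varepsilon) \cong H_*(\tilde Y_\eta)$, and via $\pi_\varepsilon$ this gives the claimed splitting (\ref{eqn:SplittingSecondHomologyGroupSurfaces}) together with simple connectedness of $X_\varepsilon$.

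The main obstacle is the second stage: ensuring that the partial smoothing of the $\PP^1$ in $\tilde Y_\eta$ does not produce additional $2$-cycles or $1$-cycles beyond the sphere class itself. This requires a careful description of the non-isolated complete intersection along $\PP^1$ inside the family $\tilde Y$, for which the adapted Siersma--Tibar--Yomdin framework developed in section 2 is indispensable; without it, the cycle $[\PP^1]$ could a priori split or acquire torsion in the passage to $Y_\varepsilon$.
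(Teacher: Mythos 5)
Your proposal takes a genuinely different route from the paper, and the middle of it does not hold together. The paper's own proof is a direct, one-stage argument that never leaves the isolated-singularity world: $Y_0$ retracts onto $V\cong\PP^1$, so $H_2(Y_0,B)\cong\ZZ$ is generated by the relative cycle given by $V$ with the interiors of the Milnor balls around the ICIS points removed; passing to a smoothing $Y_\delta$ changes nothing outside those balls, so excision gives $H_2(Y_\delta,\bigcup_i F_i)\cong H_2(Y_0,B)\cong\ZZ$, and the long exact sequence of the pair $(Y_\delta,\bigcup_i F_i)$ together with Hamm's result $H_1(\bigcup_i F_i)=0$ yields $0\to\ZZ^r\to H_2(Y_\delta)\to\ZZ\to 0$, which splits. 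No intermediate rank-$1$ perturbation and none of the section-2 line-singularity machinery is needed: the whole point of the hypothesis ``$(Y_0,V)$ has at most isolated singularities'' is to exclude the situation that machinery was built for.

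The concrete gaps in your version are in the intermediate stage and in the alleged second stage. First, your description of $\tilde Y_\eta$ is internally inconsistent: if every ICIS of $Y_0$ (all of which lie on $V$) has been replaced by its local Milnor fiber, then $\PP^1$ cannot ``persist as the exceptional set''; and for a generic rank-$1$ perturbation the exceptional set is in fact empty, since the locus where the perturbed $3\times 2$ matrix drops to rank $0$ is cut out by six equations in $\CC^4$. What survives is not an embedded exceptional $\PP^1$ but a relative $2$-cycle (a sphere with holes, closed up inside the local Milnor fibers), which is exactly the object the paper's excision argument isolates. Second, under the theorem's hypothesis there is no non-isolated complete intersection along a $\PP^1$ at any stage of the deformation, so the object your second stage proposes to analyze with the Siersma--Tib\u{a}r-type results of section 2 does not exist; that stage is either vacuous or analyzes the wrong space. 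The worry that $[\PP^1]$ could ``split or acquire torsion'' is also unfounded here, precisely because the passage from $Y_0$ to $Y_\varepsilon$ is constant outside the Milnor balls of the ICIS points. If you drop the two-stage detour and run your Mayer--Vietoris decomposition directly on $Y_\delta$ (neighborhood of the punctured $V$ versus the local Milnor fibers, glued along pieces of the links), you recover essentially the paper's argument.
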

We give a sketch of the proof in order to also recall the key arguments.
\begin{proof}
  The exceptional set $V = \{0\}\times \PP^1$ of the Tjurina transform $Y_0$ 
  is a strong deformation retract of $Y_0$. Let $B = \bigcup_{i=1}^N$ be a 
  union of Milnor balls around the singular points $p_i$ of $Y_0$. 
  For $q>0$ we have isomorphisms 
  \[
    H_q(V) \cong H_q(Y_0) \cong H_q(Y_0,B)
  \]
  and if $q=2$, $H_2(Y_0,B)$ is freely generated by a relative cycle, 
  which can be represented by the 
  exceptional set $V \cong \PP^1$ with the interiors of the Milnor balls 
  $B_i$ cut out from it. It is a sphere with holes and the boundary consists 
  of circles located in the links of the ICIS of $Y_0$. 

  When we now pass to a smoothing $Y_\delta$ of $Y_0$, we do not 
  change the topology outside the Milnor balls $B_i$. Glueing in the 
  local Milnor fibers $F_i \subset B_i$, we obtain the following 
  long exact sequence
  \[
    \xymatrix{
      0 \ar[r] & 
      H_2(\bigcup_{i} F_i ) \ar[r] & 
      H_2(Y_\delta) \ar[r] & 
      H_2(Y_\delta, \bigcup_i F_i ) \ar[r] &
      H_1(\bigcup_i F_i ) \ar[r] & 
      \cdots
    }
  \]
  By excision we have $H_2(Y_\delta,\bigcup_i F_i) \cong H_2(Y_0,B) \cong \ZZ$. 
  The zero on the left shows that there are no relations among the vanishing 
  cycles in the local Milnor fibers $F_i$. On the right, the term 
  $H_1(\bigcup_i F_i)$ vanishes because of Hamms result \cite{Hamm72}. 
  We deduce the desired splitting.
\end{proof}

For ICMC2 singularities $(X_0,0)$ for which the Tjurina transform 
is smooth, the Milnor fiber is diffeomorphic to $(Y_0,V)$. 
Consequently if we let 
\[
  L : X_\varepsilon \to \PP^1, \quad x \mapsto \Span A_\varepsilon^T(x),
\]
be the regular map on the Milnor fiber given by the deformed matrix 
$A_\varepsilon$, then a generator of 
$H_2(X_\varepsilon)$ is given by the fundamental class of a differentiable section 
$l : \PP^1 \to X_\varepsilon$ 
of $L$, i.e. a map $l$ such that $L \circ l = \operatorname{Id}_{\PP^1}$.

In general the existence of such a section is hard to prove. But from the proof 
of theorem \ref{thm:MainTheoremLastPaperSurfaces} it is evident that the generator 
of the second summand of the splitting (\ref{eqn:SplittingSecondHomologyGroupSurfaces}),
or just the generator of the second homology group in theorem 
\ref{thm:MainTheoremLastPaper}, 
is ``coming from'' the exceptional set. To make this more precise, we give the 
following definition.

Let $(X,0) \subset (\CC^N,0)$ be a determinantal singularity of type $(m,n,t)$ given 
by a matrix $A$ and 
\[
  A_\varepsilon : B \to M_{m,n}
\]
a stabilization of $A$ defined on some Milnor ball $B \subset \CC^N$ for 
$(X,0)$. Because $A_\varepsilon$ is transverse to all the strata of 
$M_{m,n}^s$, the Tjurina transform 
$Y_\varepsilon \subset B \times \operatorname{Gr}(t-1,n)$
of $X_\varepsilon = A_\varepsilon^{-1}(M_{m,n}^s)$ 
is a smooth compact manifold with corners
(Recall that $Y_\varepsilon$ is isomorphic to $X_\varepsilon$ in case $(X,0)$ 
was smoothable). By abuse of notation, let 
\[
  L: Y_\varepsilon \subset B \times \operatorname{Gr}(t-1,n) \to 
  \operatorname{Gr}(t-1,n)
\]
be the projection to the Grassmannian. Consider the image 
$G \subset H^\bullet(Y_\varepsilon)$ of 
the induced map 
\[
  L^* : H^\bullet(\operatorname{Gr}(t-1,n)) \to H^\bullet(Y_\varepsilon)
\]
in cohomology. 

\begin{definition}
  A cycle $[\sigma] \in H_\bullet(Y_\varepsilon)$ is said to be 
  \textit{horizontal}, if the cap product 
  $ g \cap [\sigma] $ is zero for all $g \in G = L^*(H^\bullet(\operatorname{Gr}(t-1,n)))$.
  We also write 
  \[
    [\sigma] \in G^\perp.
  \]
  All other cycles in $H_\bullet(Y_\varepsilon)$ are called \textit{vertical}. 
  We also say they are \textit{sitting over} the Grassmannian.
\end{definition}

\begin{corollary}
  Let $X_\varepsilon$ be the Milnor fiber of an ICMC2 singularity 
  $(X_0,0)\subset (\CC^{n+2},0)$ of dimension $n=2$ or $3$ and of 
  Cohen-Macaulay type $t=2$ with only isolated singularities in 
  the Tjurina transform. Then the homology of $X_\varepsilon$ splits 
  into 
  \[
    H_\bullet(X_\varepsilon) \cong G^\perp \oplus \ZZ,
  \]
  where the second summand lives in degree $2$ and the cap product with 
  $L^*([\PP^1]^\vee)$, the pullback of the dual of the fundamental class of 
  $\PP^1$, is a perfect pairing.
  \label{cor:SplittingHomologyGroupsOldPaper}
\end{corollary}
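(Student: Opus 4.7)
The plan is to work entirely on the Tjurina transform. Since our singularities are smoothable ($N=n+2\leq 5 < 6 = \codim M_{3,2}^1$), the map $\pi_\varepsilon: Y_\varepsilon\to X_\varepsilon$ is an isomorphism by \cite{FKZ15}, Proposition 3.8, so it suffices to study the cap product with $\eta:=L^*[\PP^1]^\vee$ on $Y_\varepsilon$. Using the proofs of Theorems \ref{thm:MainTheoremLastPaper} and \ref{thm:MainTheoremLastPaperSurfaces} I would first pin down a natural generating set of $H_\bullet(Y_\varepsilon)$: families of \emph{ICIS classes} coming from vanishing cycles in the local Milnor fibers $F_j\subset B_j$ of the isolated singularities $p_j\in V$ of $Y_0$, together with one distinguished class $\alpha\in H_2(Y_\varepsilon)$ obtained by patching the relative cycle $V\setminus\bigcup_j\mathring B_j$ (the exceptional $\PP^1$ with small open disks around the $p_j$ cut out) with capping $2$-chains inside the $F_j$'s.

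The core computation is $\eta\cap-$ on each generator, which I would carry out via $L_*$. For an ICIS class $\sigma$ supported in $B_j$, its pushforward $L_*\sigma$ factors through the contractible neighborhood $L(B_j)\subset\PP^1$, so $L_*\sigma=0$ and hence $\eta\cap\sigma=0$ in every degree. For $\alpha$, the main piece $V\setminus\bigcup_j\mathring B_j$ maps by the identity onto $\PP^1$ minus small disks (since $L|_V=\mathrm{Id}$), while the capping $2$-chains map into the same contractible disks $L(B_j)$ and hence contribute zero to $L_*\alpha\in H_2(\PP^1)$; together this gives $L_*\alpha=[\PP^1]$ and $\eta\cap\alpha=\pm[\mathrm{pt}]$, a generator of $H_0(Y_\varepsilon)$. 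In all other degrees, $\eta\cap-$ vanishes automatically: it lands in $H_{k-2}$, which is zero for $k=0,1$ and, in the threefold case, also for $k=3$ since $H_1(Y_\varepsilon)=0$.

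These two facts together produce the splitting. The horizontal subgroup $G^\perp$ contains all $H_i$ with $i\neq 2$, and in degree $2$ exactly the subgroup spanned by the ICIS classes; the complement $\ZZ\cdot\alpha$ is sent isomorphically onto $H_0(Y_\varepsilon)$ by $\eta\cap-$, which is the asserted perfect pairing. The main obstacle I expect is justifying the degree computation for $\alpha$: namely, ensuring both that the relative cycle representative truly lifts to an absolute class with $L_*$-degree one, and that the capping $2$-chains contribute trivially. This should follow from the long-exact-sequence setup in the proof of Theorem \ref{thm:MainTheoremLastPaperSurfaces} (and its threefold analogue), which guarantees that $\alpha$ is well-defined modulo ICIS classes and that its image under $L_*$ is determined by the degree of $L|_V = \mathrm{Id}_{\PP^1}$.
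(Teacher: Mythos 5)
Your proposal is correct and follows essentially the same route as the paper: the corollary is exactly the reformulation of Theorems \ref{thm:MainTheoremLastPaper} and \ref{thm:MainTheoremLastPaperSurfaces} obtained by identifying the extra $\ZZ$-summand with the class lifted from the relative cycle $V\setminus\bigcup_j\mathring B_j$ and checking that $L_*$ kills the ICIS classes (supported over contractible subsets of $\PP^1$) while sending that class to $[\PP^1]$. The only point worth tightening is the inference from $L_*\sigma=0$ to $\eta\cap\sigma=0$, which is cleanest if you either invoke the projection formula together with the vanishing of $H_1(Y_\varepsilon)$ (resp.\ injectivity of $L_*$ on $H_0$), or represent $[\PP^1]^\vee$ by a cocycle supported away from $L(B_j)$.
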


The main goal of this paper is to extend 
theorem \ref{thm:MainTheoremLastPaper},  
theorem \ref{thm:MainTheoremLastPaperSurfaces} and 
corollary \ref{cor:SplittingHomologyGroupsOldPaper}
to the case of arbitrary ICMC2 
singularities of Cohen-Macaulay type $t=2$ and 
dimension $2$ or $3$, i.e. we also allow nonisolated singularities 
in the Tjurina transform. 

\begin{theorem}{(Main theorem)}
  Let $X_\varepsilon$ be the Milnor fiber of an ICMC2 singularity
  $(X_0, 0) \subset (\CC^{n+2},0)$ of dimension $n=2$ or $3$ and
  Cohen-Macaulay type $t=2$ 
  given by a matrix $A \in \Mat(3,2;\CC\{\underline x\})$ and 
  its perturbation $A_\varepsilon$. 
  Let 
  \[ 
    L : X_\varepsilon \to \PP^1,\quad x\mapsto \Span A_\varepsilon^T(x).
  \]
  The Milnor fiber $X_\varepsilon$ is simply connected
  and the homology of $X_\varepsilon$ splits into 
  \[
    H_\bullet(X_\varepsilon) \cong G^\perp \oplus \ZZ.
  \]
  The cap product with $L^*(H^2(\PP^1))$ gives a perfect 
  pairing of the vertical cycles with $H^2(\PP^1) \cong \ZZ$. 
  If $n=3$, then $H_2(X_\varepsilon) \cong \ZZ$ consists of the 
  vertical cycles only.
  \label{thm:MainTheorem}
\end{theorem}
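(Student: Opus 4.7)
The plan is to follow the two-stage strategy indicated by the paper's section outline: first replace $A$ by a ``generic rank $1$ perturbation'' $A_\delta$, then pass from $A_\delta$ to the full stabilization $A_\varepsilon$. A generic rank $1$ perturbation means that $A_\delta$ is chosen transverse to the origin $M_{3,2}^0 = \{0\} \subset M_{3,2}$ but not yet to $M_{3,2}^1$. Since $\codim M_{3,2}^0 = 6$, the preimage $A_\delta^{-1}(0)$ is empty when $n = 2$, so $Y_\delta$ is already smooth and diffeomorphic to $X_\varepsilon$; when $n = 3$, this preimage consists of finitely many isolated points $p_1,\dots,p_k \in B$, and over each $p_i$ the Tjurina transform $Y_\delta$ contains the exceptional $\PP^1$ as a $1$-dimensional singular locus whose transversal type is an isolated complete intersection singularity. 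In this way the possibly non-isolated singularities of $Y_0$ are concentrated into clean line singularities on $Y_\delta$, to which the local complete intersection line singularity machinery developed in Section~2 applies.

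Next I would compute $H_\bullet(Y_\delta)$ in the spirit of Theorem~\ref{thm:MainTheoremLastPaperSurfaces}: since the exceptional $V = \PP^1$ is a strong deformation retract of a neighborhood of $V$ in $Y_0$, the pair $(Y_\delta, W)$, where $W$ is a union of small tubular neighborhoods of the line singularities (resp.\ of the Milnor balls in the surface case), has the same relative homology as $(V, V \cap W)$. A long exact sequence then expresses $H_\bullet(Y_\delta)$ in terms of $H_\bullet(\PP^1)$ and the local Milnor fibers of the line singularities from Section~2; in particular the class $[\PP^1] \in H_2(Y_\delta)$ persists and generates a $\ZZ$-summand, and $Y_\delta$ is simply connected because $\PP^1$ is and the local data contribute only in middle degree. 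In the threefold case the further passage from $Y_\delta$ to $Y_\varepsilon \cong X_\varepsilon$ is handled by a second long exact sequence for $(Y_\varepsilon, Y_\delta)$: the local Milnor fibers of the line singularities contribute additional vanishing spheres in the middle dimension $n = 3$, all of which live inside small tubular neighborhoods of fibers of $L$, so they annihilate $L^* H^2(\PP^1)$ under cap product and inject into $G^\perp$.

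The vertical $\ZZ$-summand in the splitting $H_\bullet(X_\varepsilon) \cong G^\perp \oplus \ZZ$ is then identified with the class of the exceptional $\PP^1$ transported through the deformation. The perfect pairing with $L^*([\PP^1]^\vee)$ follows because, after a small generic perturbation, the restriction of $L$ to this transported $\PP^1$ is a degree $1$ self-map of $\PP^1$, so $L^*([\PP^1]^\vee) \cap [\PP^1] = 1$, while by definition the cap product kills all classes in $G^\perp$. Simple connectedness of $X_\varepsilon$ can be read off from the map $L$: a generic fiber is an $(n-1)$-dimensional ICIS Milnor fiber (simply connected for $n \geq 3$ and connected for $n = 2$), $\pi_1(\PP^1) = 1$, and the transported $\PP^1$ provides a section, so a standard van Kampen argument applies. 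For $n = 3$, the identification $H_2(X_\varepsilon) \cong \ZZ$ then follows because every vanishing cycle produced by smoothing the line singularities lives in degree $3$. The main obstacle I anticipate is the precise control of the local Milnor fibers of the line singularities on $Y_\delta$, in particular verifying that their vanishing cycles really lie in $G^\perp$ and in the expected degree; this is the combinatorics that the Siersma--Tibar--Yomdin-style results of Section~2 must be tailored to in the determinantal setting.
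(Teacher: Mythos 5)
Your overall skeleton (rank-$1$ perturbation first, then smoothing; a Mayer--Vietoris computation; a section of $L$ generating the vertical $\ZZ$ and pairing with $L^*[\PP^1]^\vee$ by degree $1$) matches the paper, but two of your key steps rest on a misreading of the construction and omit the mechanism that actually makes the theorem true. First, the ``generic rank $1$ perturbation'' is not a partial stabilization transverse to $M_{3,2}^0$ but not to $M_{3,2}^1$; it is the perturbation $A-\delta B$ by a \emph{general constant matrix $B$ of rank $1$} (display (\ref{eqn:GenericRank1Perturbation})). Your dimension count is also off: $\codim M_{3,2}^1=6>n+2$, so a genuinely generic map misses the origin of $\Mat(3,2;\CC)$ for both $n=2$ and $n=3$, and the picture of finitely many points over which $Y_\delta$ acquires line singularities does not occur. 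The line singularities live in $Y_0$ itself (the Tjurina transform is singular along all of $V$), and the Section~2 machinery is applied to the local structure of $Y_0$ at the special points of $V$, not to singularities created on $Y_\delta$.

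Second, and more seriously, you never use the \emph{axis point}. Because $f=a_{3,1}s_1+a_{3,2}s_2$ is a section of $\OO(1)$ and not a function, any rank-$1$ perturbation necessarily vanishes at one point of $\PP^1$; the fiber $Y_\infty^\pitchfork$ of $L$ over that point does not deform and remains contractible (lemma \ref{lem:YdeltaAtAxisPoint}, corollary \ref{cor:YdeltaContractible}). This is the only reason the transversal $(n-1)$-cycles die in $Y_\delta$: locally at the special points they need \emph{not} be homologous to zero (the paper's $F_1A_3$ example makes exactly this point), but by corollary \ref{cor:TopologySecondBoundary} and theorem \ref{thm:ConnectivitySecondBoundary} every low-degree cycle of a local Milnor fiber can be pushed into a transversal Milnor fiber, transported along the fiber bundle over $V'$ to the axis, and collapsed there. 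Without this step your claim that the new vanishing cycles ``live in degree $3$'' for $n=3$, hence $H_2(X_\varepsilon)\cong\ZZ$, is unsupported and would in general be false. Relatedly, there is no class $[\PP^1]$ to ``transport'': $V\subset\Sing(Y_0)$ disappears in $Y_\delta$, and the vertical generator has to be built as a section of $L$ over $V'\cup\{\infty\}$ --- extendable over $\infty$ precisely because the axis fiber is contractible, and closing up at the special points via the isomorphism $H_2(F,\partial_2 F)\cong H_1(\partial_2 F)$ from theorem \ref{thm:ConnectivitySecondBoundary}.
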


Since for any given example the Euler characteristic $\chi(X_\varepsilon)$ 
can be computed by e.g. the polar multiplicities \cite{NOT} of $(X_0,0)$, 
we obtain the following corollary.

\begin{corollary}
  Let $X_\varepsilon$ be the Milnor fiber of an 
  ICMC2 threefold singularity of Cohen-Macaulay type $t=2$ 
  The Betti numbers 
  of $X_\varepsilon$ 
  can be computed as 
  \begin{eqnarray*}
    b_0 &=&  1 \\
    b_2 &=&  1 \\
    b_3 &=&  - \chi(X_\varepsilon) + 2 \\
    b_k &=& 0 \quad \textnormal{ for } k \notin \{ 0, 2, 3 \}, 
  \end{eqnarray*}
\end{corollary}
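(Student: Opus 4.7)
The plan is to combine Theorem \ref{thm:MainTheorem} with a classical dimensional constraint. This pins down every Betti number directly except $b_3$, and $b_3$ is then forced by the Euler characteristic identity.

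First I would observe that since $A_\varepsilon$ is a stabilization, the Milnor fiber $X_\varepsilon$ is smooth and, as a closed complex submanifold of the Milnor ball $B \subset \CC^5$, is a Stein manifold of complex dimension $3$. By the Andreotti-Frankel theorem it has the homotopy type of a CW-complex of real dimension at most $3$, so $b_k(X_\varepsilon) = 0$ for all $k > 3$. Together with $b_0 = 1$ from path-connectedness, this rules out every Betti number outside $\{b_0, b_1, b_2, b_3\}$.

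Next I would feed in Theorem \ref{thm:MainTheorem}. The simple-connectedness of $X_\varepsilon$ yields $H_1(X_\varepsilon) = 0$ via Hurewicz (equivalently, via the Greuel-Steenbrink vanishing in the range $0 < k \leq \dim X - \codim X = 1$), so $b_1 = 0$. The theorem also states that in dimension $n = 3$ one has $H_2(X_\varepsilon) \cong \ZZ$, hence $b_2 = 1$.

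With $b_0, b_1, b_2$ fixed and every higher Betti number except $b_3$ known to vanish, the Euler characteristic identity
\[
  \chi(X_\varepsilon) = b_0 - b_1 + b_2 - b_3 = 2 - b_3
\]
yields $b_3 = -\chi(X_\varepsilon) + 2$, which is the claim. The right-hand side is effectively computable from the polar multiplicities of $(X_0, 0)$ as in \cite{NOT}, so the formula is genuinely usable in practice. There is no real obstacle here once the Main Theorem is available; the only external input beyond it is the Andreotti-Frankel dimension bound, which is classical. In short, this corollary is an accounting consequence of Theorem \ref{thm:MainTheorem}.
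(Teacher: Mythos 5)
Your proof is correct and matches the paper's intent: the paper gives no explicit proof of this corollary, presenting it as an immediate accounting consequence of Theorem \ref{thm:MainTheorem} together with the computability of $\chi(X_\varepsilon)$, which is exactly the bookkeeping you carry out (with the Andreotti--Frankel bound supplying the vanishing above the middle dimension and $\chi = 2 - b_3$ closing the argument).
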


\subsection{An example and outline of the proof}

To illustrate the ideas of the proof of theorem \ref{thm:MainTheorem}, we give an example 
of an ICMC2 threefold singularity with non-isolated singular locus in the 
Tjurina transform.

\vspace{0.5cm}
  Let $(X_0,0)\subset (\CC^5, 0)$ be given by the matrix
  \begin{align}
    \begin{pmatrix}
      v & x \\
      w & y \\
      -2xy & v^2 + w^2 +z^2 
    \end{pmatrix}
    \label{eqn:DefiningMatrixExample1}
  \end{align}
  and consider the smoothing obtained by perturbing the lower left
  entry with a constant $\delta$. 
  We denote the homogeneous coordinates of $\PP^1$ by $(s_1:s_2)$. 
  Then the equations for the Tjurina transform 
  $(Y_0,V) \subset (\CC^{5}\times \PP^1, \{0\}\times \PP^1)$ 
  and its deformation by $\delta$ 
  are 
  \begin{align}
    \begin{pmatrix}
      v & x \\
      w & y \\
      -2xy-\delta & v^2 + w^2 +z^2 
    \end{pmatrix}
    \cdot 
    \begin{pmatrix}
      s_1 \\ s_2
    \end{pmatrix}
    = 0.
    \label{eqn:DefiningEquationsTjMod}
  \end{align}

  Let us look at the first chart $\{s_1 \neq 0\}$. 
  We write $s = s_2/s_1$ for the corresponding standard affine coordinate.
  The equations from the first two rows read 
  \[
    v = -s\cdot x, \quad w = -s \cdot y.
  \]
  Substituting this in the equation from the last row, 
  we obtain a hypersurface
  \[
    h = s^3\cdot x^2 + s^3 \cdot y^2 - 2xy + s \cdot z^2, 
  \]
  which is perturbed by a constant $\delta$.
  We can interprent this as a quadratic form $Q_s$ in $(x,y,z)$ 
  parametrized by $s$ and write it in the standard matrix form:
  \[
    h = Q_s(x,y,z) = 
    \begin{pmatrix}
      x & y & z 
    \end{pmatrix} 
    \cdot 
    \begin{pmatrix}
      s^3 & -1 & 0 \\
      -1 & s^3 & 0 \\
      0 & 0 & s 
    \end{pmatrix}
    \cdot 
    \begin{pmatrix}
      x \\ y \\ z 
    \end{pmatrix}
    = \delta
  \]
  Any quadratic form should of course be diagonalized. To do this, 
  we introduce new coordinates 
  \[
    \begin{pmatrix}
      \tilde x \\ \tilde y \\ \tilde z
    \end{pmatrix}
    :=
    \begin{pmatrix}
      \frac{1}{\sqrt{2}} & \frac{-1}{\sqrt{2}} & 0 \\
      \frac{1}{\sqrt{2}} & \frac{1}{\sqrt{2}}  & 0 \\
      0 & 0 & 1 
    \end{pmatrix}
    \cdot 
    \begin{pmatrix}
      x \\ y \\ z 
    \end{pmatrix}, 
  \]
  in which our hypersurface equation takes the form 
  \begin{equation}
    h = Q_s(\tilde x, \tilde y, \tilde z) =
    \begin{pmatrix}
      \tilde x & \tilde{y} & \tilde{z} 
    \end{pmatrix}
    \cdot 
    \begin{pmatrix}
      s^3 + 1 & 0 & 0 \\
      0 & s^3 -1 & 0 \\
      0 & 0 & s
    \end{pmatrix}
    \cdot 
    \begin{pmatrix}
      \tilde x \\ \tilde{y} \\ \tilde{z} 
    \end{pmatrix}
    = \delta
    \label{eqn:HypersurfaceDiagonalizedExample1}
  \end{equation}
  This is a family of $A_1$-surface singularities, which degenerates as 
  $s$ approaches one of the seven values 
  \[
    s \in \sqrt[6]{1} \cup \{0\}.
  \]
  Now it is clear that in this chart the Tjurina transform $Y_0$ is singular along 
  the whole exceptional set $V$, the $s$-axis in this chart.
  
  Let $L : \CC^5 \times \PP^1 \to \PP^1$ be the standard projection, i.e. in this 
  chart the projection to the $s$-axis. 
  If we restrict $h$ to a general transversal slice to $V$ given by the hypersurface 
  $\{ L = c \}$ for a general $c \in \CC$, we obtain the 
  \textit{transversal singularity}, denoted by $Y_0^\pitchfork$
  \[
    h|_{\{L=c\}}  = (c^3+1) \tilde{x}^2 + (c^3-1) \tilde{y}^2 + c \tilde{z}^2 = \delta
  \]
  and a smoothing induced by the perturbation with the constant $\delta$. 
  This transversal singularity is isolated and of type $A_1$.

  For $\delta \neq 0$ we see a vanishing cycle $[\sigma]$ in the Milnor fiber  
  \[
    Y_\delta^\pitchfork = \{ h = \delta \} \cap \{ L = c \}.
  \]
  of the transversal singularity. It lives in the second homology 
  group $H_2(Y_\delta^\pitchfork)$ and can be represented by a $2$-sphere. 
  This is a candidate for further contributions of the second homology 
  group of 
  \[
    Y_\delta \subset \CC^5 \times \PP^1,
  \]
  the fiber over $\delta$ in the given deformation, and hence for the Milnor 
  fiber $X_\varepsilon$ of $(X_0,0)$.
  Whether or not $[\sigma]$ is nonzero as an element of $H_2(Y_\delta)$ 
  depends on the inclusion 
  \[
    Y_\delta^{\pitchfork} \subset Y_\delta.
  \]
  To shed some light on this question, let us observe the behaviour 
  close to the degeneracy points 
  \[
    K := \left\{ (\tilde{x},\tilde{y},\tilde{z},s) : \tilde{x} = \tilde{y} = \tilde{z} = 0, 
    s \in \sqrt[6]{1} \cup \{0\} \right\}.
  \]
  The analytic type of the singularity $h$ at either of these points is what 
  D. Siersma calls the $D_\infty$-singularity, a.k.a. the
  \textit{Whitney umbrella}.
  For any $p\in K$ we can choose a Milnor ball $B = B(p)$ for the singularity 
  of $h$ around $p$ and a value $c \in \CC$ for the transversal singularity 
  sufficiently close to $s(p)$ such that the intersection $B$ with the 
  hyperplane $\{L=c\}$ is nonempty. 
  D. Siersma shows in \cite{Siersma83}, proposition 3.8: 

  \vspace{0.5cm}
  \noindent
  For the $D_\infty$ singularity of dimension $n$
  the pair of Milnor fibers $(Y_\delta \cap B, Y_\delta^\pitchfork\cap B)$ 
  is homotopy equivalent to the pair of spheres
  \[
    (S^{n},S^{n-1}), 
  \]
  where $S^{n-1} \hookrightarrow S^{n}$ is the standard equatorial embedding.
  \vspace{0.5cm}

  Let $W\subset \CC$ be the complement of some small discs around the special 
  points in $K \subset \CC$. Then for $\delta>0$ small enough 
  \begin{equation}
    L : Y_\delta \cap L^{-1}(W)) \to W
    \label{eqn:LFiberBundleOverOpenSet}
  \end{equation}
  is a fiber bundle with fiber $Y_\delta^\pitchfork$. This means we 
  can freely move the equator of all the vanishing cycles coming from 
  the seven $D_\infty$ points and connect all half spheres globally. 
  The affine part of $Y_\delta$ is therefore homotopic to a bouquet of 
  spheres:
  \begin{equation}
    Y_\delta \setminus \{ s_1 \neq 0\} \cong 
    \underbrace{S^3 \vee \cdots \vee S^3}_{2\cdot 7 -1 \textnormal{ times}},
    \label{eqn:YDeltaWithoutInfinityBouquetOfSpheres}
  \end{equation}
  with each of their equators being homologous to the vanishing cycle 
  $S^2$ of 
  any of the transversal Milnor fibers.

  To complete the picture, let us look at the other chart $\{ s_2 \neq 0 \}$.
  We denote the corresponding affine coordinate of $\PP^1$ by $t = s_1/s_2$. 
  Again the equations for the first two rows of the matrix allow us to substitute 
  in the equation of the third row and we obtain the perturbation of a 
  hypersurface equation: 
  \[
    h := v^2 + w^2 - 2t^3 \cdot vw + z^2 = \delta \cdot t.
  \]
  Regarding this as a quadratic form $Q_t$ in $(v,w,z)$ parametrized by $t$ 
  and diagonalizing as before, we obtain 
  \[
    \begin{pmatrix}
      \tilde{v} & \tilde{w} & \tilde{z} 
    \end{pmatrix} 
    \cdot 
    \begin{pmatrix}
      1+t^3 & 0 & 0 \\
      0 & 1-t^3 & 0 \\
      0 & 0 & 1
    \end{pmatrix}
    \cdot 
    \begin{pmatrix}
      \tilde{v} \\
      \tilde{w} \\
      \tilde{z}
    \end{pmatrix}
    = 
    \delta \cdot t.
  \]
  We do recover the six degeneracy values for $t$ of the quadratic form 
  at the six roots of unity. However, 
  $Q_t$ does not degenerate at the point $(0,\infty) \in \CC^{5} \times \PP^1$, 
  the origin in this chart. 
  Hence we can make an analytic change of coodinates around this 
  point such that the local equation $h$ for $Y_0$ at $(0,\infty)$ is just an 
  $A_\infty$ singularity:
  \[
    h = x^2 + y^2 + z^2 : (\CC^4,0) \to (\CC,0).
  \]
  But note that we do not perturb by a constant, but by $\delta \cdot t$. 
  This means, the transversal slice over $\{ t = 0 \}$ does not deform! 
  We have 
  \[
    Y^\pitchfork_\infty := Y_0 \cap \{ t = 0 \} = Y_\delta \cap \{ t= 0 \}.
  \]
  The set $Y^\pitchfork_\infty$ is what we call an \textit{axis of the deformation} 
  and its intersection with the exceptional set $V$ the 
  \textit{axis point} $(0,\infty)$. 
  
  Being a representative of the germ of an 
  isolated singularity, $Y^\pitchfork_\infty$ is a contractible fiber 
  in the family given by 
  $L : Y_\delta \to \PP^1$. 
  For the vanishing cycle $[\sigma]$ of the transversal Milnor fiber, 
  the equator of all the $3$-spheres generating the homology of 
  $Y_\delta \setminus \{s_1 \neq 0\}$, this gives one more opportunity to close. 
  Hence we obtain: 
  \[
    H_3 (Y_\delta) \cong \ZZ^{14}
  \]
  is freely generated by $3$-spheres.

  But $Y_\delta$ is not homotopic to a bouquet of spheres, as 
  one might think at this point. There is a nontrivial cycle in $H_2(Y_\delta)$ 
  sitting over $\PP^1$, which is constructed as follows.

  Recall (\ref{eqn:LFiberBundleOverOpenSet})
  that $Y_\delta \cap L^{-1}(W)$ had the structure of a fiber bundle 
  over $W$ by means of $L$ .
  The fiber $Y_\delta^\pitchfork$ is homotopic to $S^2$, while the base 
  $W$ has the homotopy type of a bouquet of $6$ circles. Obstruction theory 
  tells us, that up to homotopy there is a unique continous section 
  \[
    l : W \to Y_\delta \cap L^{-1}(W) 
  \]
  of $L$. Because over $\infty \in \PP^1$ we only glue in a contractible fiber, we 
  can certainly extend $l$ to $W \cup \{\infty\}$. 
  Let $B = \overline{ \PP^1 \setminus (W \cup \{\infty\})}$ be the closure of the 
  complement of $W \cup \{\infty\}$. 
  The image of $l$ defines a unique relative cycle $[l]$ in 
  $H_2(Y_\delta, L^{-1}(B))$, 
  whose boundary consists of seven circles in the links of the local Milnor fibers 
  of the $D_\infty$ points. 
  At every such point $p_i$ we can choose local coordinates, in which $Y_\delta$ 
  is in the normal form  
  \[
    s\cdot x^2 + y^2 + z^2 = \delta.
  \]
  We can extend $l$ by glueing 
  \[ 
    s \mapsto (s,x,y,z) = (s,0,0,\sqrt{\delta})
  \]
  to the respective part of the boundary $\partial [l]$ of $[l]$ and 
  obtain a global section $l' : \PP^1 \to Y_\delta$.
  Consider the long exact sequence of the pair $(Y_\delta,L^{-1}(B))$:
  \[
    \xymatrix{
      H_2(L^{-1}(B)) \ar[r]^\iota & 
      H_2(Y_\delta) \ar[r] &
      H_2(Y_\delta, L^{-1}(B)) \ar[r]^{\partial^*} & 
      H_1(L^{-1}(B)) 
    }
  \]
  The existence of the local extensions of $l$ tells us that 
  $\partial^*([l]) = 0$ and also $H_2(Y_\delta \cap L^{-1}(B) = 0$.
  Hence $H_2(Y_\delta)$ is freely generated by 
  $[l']$, the image of the fundamental class of $\PP^1$ under $l$. 
  It is evident that
  \[
    L^* : H^2(\PP^1) \to H^2(Y_\delta) = \Hom(H_2(Y_\delta),\ZZ)
  \]
  is an isomorphism.

  Because the deformation we started with was a smoothing of $(X_0,0)$, 
  the spaces $Y_\delta$ and $X_\delta$ are naturally isomorphic and we're done 
  with the determination of the homology groups of the Milnor fiber 
  of $(X_0,0)$.

\vspace{0.5cm}
We will now outline the proof of the main theorem \ref{thm:MainTheorem} using 
this example. It is widely inspired by the work of D. Siersma and M. Tibar on 
the vanishing topology of projective hypersurfaces \cite{ST15}, in the way 
we piece together the global picture from local computations and 
the role played by the axis point. 

\vspace{0.5cm}
\noindent
\textit{Step I: Study line singularities, which are local complete intersections.} 
In general the singular locus $V = \{0\} \times \PP^1$ 
of the Tjurina transform $Y_0 \subset \CC^5 \times \PP^1$ will consist 
of a Zariski open set $U$, over which the projection $L$ to $\PP^1$ induces the structure 
of a fiber bundle with fiber $Y_0^\pitchfork$, the transversal singularity. 
Its Milnor fiber $Y_\delta^\pitchfork$ is well defined up to diffeomorphism. 
This is done in section \ref{subsec:MilnorFiberProductCase}. 

Then we will treat the special points, i.e. the complement of $U$. In the above example
we saw that the vanishing cycle $[\sigma]$ of the transversal singularity became 
homologous to zero in the local Milnor fibers of the $D_\infty$ singularities. 
But in the general case of arbitrary 
line singularities which are complete intersections, 
there is no reason for this to hold. 
Consider for example the $F_1 A_3$ singularity from De Jongs list 
\cite{DeJong88}: 
\[ 
  f = xz^2 +y^2 z = z\cdot (xz + y^2).
\]
He shows that its Milnor fiber $F$ is homotopy equivalent to $S^1$. If 
we find such a singularity in the Tjurina transform of an ICMC2 surface singularity 
or a double suspension of it in the Tjurina transform of a threefold, then 
there are cycles of the transversal Milnor fiber $F^\pitchfork$, 
which are not homologous 
to zero in $F$.

It turns out that the important 
property we need, is the fact that any vanishing cycle of degree $(n-1)$ 
of the Milnor fiber $F$ of a complete intersection line singularity 
can be represented by a cycle in the transversal Milnor fiber $F^\pitchfork$.
This is done in section \ref{subsec:MilnorFiberSpecialPoint}, where we 
give a description of how the local Milnor fiber of those singularities 
is connected to its transversal Milnor fiber 
(corollary \ref{cor:TopologySecondBoundary} and theorem 
\ref{thm:ConnectivitySecondBoundary} for the threefolds and respectively 
\ref{cor:TopologySecondBoundarySurface} and \ref{thm:ConnectivitySecondBoundarySurface} 
for the surface case). 

\vspace{0.5cm}
\noindent
\textit{Step II: The role of the axis point.} 
In section \ref{subsec:GenericRankOnePerturbationAndAxisPoint} we show 
that for deformations of an ICMC2 singularity $(X_0,0)$ of dimension $n$ and its Tjurina 
transform $(Y_0,V)$ coming from a perturbation of the defining matrix $A$
with a general constant matrix $B$ of rank $1$, a 
\textit{generic rank 1 perturbation}, 
we always have an axis $Y^\pitchfork_\infty$ and an axis point $(0,\infty) \in V$.
For the fiber $Y_\delta$ of the Tjurina transform in such a deformation,  
the connectivity of local Milnor fibers $F$ of complete intersection line 
singularities with their transversal Milnor fibers $F^\pitchfork$ will 
imply that all homology of degree $n-1$ of $Y_\delta \setminus Y^\pitchfork_\infty$ 
is concentrated in the transversal Milnor fiber $Y_\delta^\pitchfork$. 
When glueing in the fiber $Y^\pitchfork_\infty$ of $L$ over $\infty$, all the cycles 
in $Y_\delta^\pitchfork$ collapse.

\vspace{0.5cm}
\noindent
\textit{Step III: Putting together the global picture. }
In the last part, section \ref{subsec:TheGlobalPicture},
we use Mayer-Vietoris arguments to compute the 
homology groups of $Y_\delta$ for a general rank $1$ perturbation 
(theorem \ref{thm:SecondHomologyGroupRank1Perturbation}): 
While the vanishing cycles of $Y_\delta^\pitchfork$ and hence also 
all $(n-1)$-cycles of the local Milnor fibers of the special points 
are homologous 
to zero in $Y_\delta$, adding $Y_\infty^\pitchfork$ to 
$Y_\delta \setminus Y_\infty^\pitchfork$ will also give 
rise to a new $2$-cycle sitting over $\PP^1$ in the sense of 
definition \ref{def:SittingOver}.
This finally leads to the proof of the main theorem \ref{thm:MainTheorem}, 
in which we pass from a general rank $1$ perturbation, for which neither 
$Y_\delta$ nor $X_\delta$ are necessarily smooth, to a smoothing of $(X_0,0)$.

\section{Topology of line singularities}

\begin{definition}
  A singularity $(Y_0,0) \subset (\CC^N,0)$ is called a \textit{line singularity}, 
  if the singular locus $V = \Sing(Y_0)$ is the germ of a line in $\CC^N$ at $0$. 
\end{definition}

\noindent
Curves cannot have line singularities -- unless they are a multiple line themselves. 
In this section, we will therefore always assume $n = \dim (Y_0,0) \geq 2$.

Let $(Y_0,0) \subset (\CC^N,0)$ be a line singularity, which is a complete intersection 
of codimension $d$ given by the equations $f_1= \dots = f_d = 0$. 
For line singularities there is in general no unique smoothing. 
But if we consider the defining equations $f_i$ as components of a 
map germ 
\[
  f : (\CC^N,0) \to (\CC^d,0),
\]
then for a chosen Milnor ball $B$ for $(Y_0,0)$, 
the preimage of a regular value $c \in \CC^d$ of $f$ on $B$ sufficiently close 
to $0$ is unique up to diffeomorphism. This is what we will refer to as 
the (local) Milnor fiber of the line singularity $(Y_0,0)$.

There is a well-known trick to reduce the problem to a Milnor fiber 
of one holomorphic function on a controlled ambient space, see e.g. \cite{Hamm72}.
Let $0 \in U \subset \CC^N$ be a neighborhood of the origin, on which 
all the $f_i$ are defined.
Consider the map 
\[
  \PP f : U \setminus Y_0 \to \PP^{d-1}, \quad x \mapsto (f_1(x):\dots:f_d(x))
\]
and choose a regular value $p\in \PP^{d-1}$ for $\PP f$.
After a change of coordinates of $\PP^{d-1}$, which corresponds to a new 
$\CC$-linear combination 
of the generators $f_i$, we can assume that $p = (1:0:\dots:0)$. Then the closure 
of its preimage in $U \subset \CC^N$ is given by 
\begin{equation}
  Y^* = \{ x \in U : f_2(x) = \dots = f_d(x) = 0 \}.
  \label{eqn:DefinitionYStar}
\end{equation}

\begin{lemma}
  The singular locus of $Y^*$ is contained in the singular locus of $Y$.
  \label{lem:SingularLocusYStar}
\end{lemma}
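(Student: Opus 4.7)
The plan is to partition the points of $Y^*$ into those lying in $Y_0$ and those outside it, and verify smoothness of $Y^*$ off $\Sing(Y_0)$ in each case. Since $Y_0 \subset Y^*$ (every common zero of $f_1,\dots,f_d$ is in particular a common zero of $f_2,\dots,f_d$), the desired inclusion of singular loci will follow once I show that $Y^*$ is smooth at any point that either lies outside $Y_0$ or is a smooth point of $Y_0$.

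First I would handle $x \in Y^* \setminus Y_0$. There $f_2(x)=\dots=f_d(x)=0$ while $f_1(x)\neq 0$, so $x$ lies in the fiber of $\PP f$ over the chosen regular value $(1:0:\dots:0)\in\PP^{d-1}$. The regular value condition forces $\PP f$ to be a submersion at $x$. In the affine chart of $\PP^{d-1}$ where the first homogeneous coordinate is nonzero, $\PP f$ is represented by the tuple $(f_2/f_1,\dots,f_d/f_1)$, and a short Leibniz computation
\[
  d\!\left(\tfrac{f_i}{f_1}\right) = \tfrac{1}{f_1}\,df_i - \tfrac{f_i}{f_1^2}\,df_1
\]
evaluated at $x$ (using $f_i(x)=0$ for $i\geq 2$) shows that the differentials of these quotients are, up to the nonzero factor $f_1(x)^{-1}$, the differentials $df_2(x),\dots,df_d(x)$. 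Submersivity therefore forces $df_2(x),\dots,df_d(x)$ to be linearly independent, so the complete intersection $Y^* = \{f_2=\dots=f_d=0\}$ is smooth at $x$.

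Next I would handle a smooth point $x \in Y_0$. At such a point the differentials $df_1(x),\dots,df_d(x)$ are linearly independent, so a fortiori the subset $df_2(x),\dots,df_d(x)$ is linearly independent, and the same Jacobian criterion shows that $Y^*$ is smooth at $x$. Combining both cases, any $x\in\Sing(Y^*)$ must lie in $Y_0$ and must be singular there.

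The only real subtlety is the translation from submersivity of $\PP f$ into $\PP^{d-1}$ to linear independence of the affine differentials $df_2,\dots,df_d$ at a point where $f_1$ is nonzero but the other $f_i$ vanish; this is where one has to be slightly careful with the Leibniz rule, but the computation collapses immediately because the $f_i$ themselves vanish at $x$. Everything else is bookkeeping, and I would expect this to be one of the quickest lemmas in the paper's technical section.
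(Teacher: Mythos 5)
Your proposal is correct and follows essentially the same route as the paper's proof: off $Y_0$ smoothness of $Y^*$ comes from the regular-value choice for $\PP f$ (you spell out the Leibniz computation the paper leaves implicit), and on $Y_0$ the inclusion of singular loci is the Jacobian-rank observation, which you state contrapositively. No gaps.
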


\begin{proof}
  (cf. \cite{Hamm72}, Lemma 1.1 or Lemma 2.2) 
  Outside $Y_0$ the space $Y^*$ is already smooth. If $Y^*$ had 
  a singular point $p\in Y_0$, this means that the jacobian of 
  $(f_2,\dots,f_d)$ does not have full rank at $p$. But then also the jacobian 
  of $(f_1,f_2,\dots,f_d)$ cannot have full rank and hence $p$ is 
  a singular point of $Y_0$ as well.
\end{proof}

We rename the first function $f_1$ by $f$. Without loss of generality we can 
assume, that the singular line $(V,0) = (\Sing(Y),0)$ is just the germ of the 
first coordinate 
axis of $\CC^N$. 
This will be the \textit{standard situation}, from which we will proceed for this section:
\begin{eqnarray}
  &f : (Y^*,0)\subset (\CC^N,0) \to (\CC,0),
  \label{eqn:StandardSituationAfterHypersurfaceReduction} \\
  &(\Sing(Y^*),0 ) = (\{x_2 = \dots = x_N = 0\},0).
  \label{eqn:SingularLocusYStar}
\end{eqnarray}

Since we are primarily interested topological questions about the singularity, we will use 
Whitney stratifications to provide the setup for applications of the 
first Thom isotopy lemma. 
We may assume that $Y^*$ admits a Whitney stratification by the strata 
\begin{equation}
  (Y^*\setminus Y_0,Y_0 \setminus V, V \setminus \{0\}, \{0\} )
  \label{eqn:WhitneyStratificationLocal}
\end{equation}
sufficiently close to the origin. The last stratum $\{0\}$ might however be 
optional.

\subsection{The polar curve}

Besides the Whitney stratification there is one more thing we need to take 
into account. Let 
\[
  L : \CC^N \to \CC, (x_1,\dots,x_N) \mapsto x_1 
\]
be the projection to the first coordinate axis. The polar locus of $f$ with 
respect to $L$ on $Y^*$ is defined as 
\begin{equation}
  \Gamma(f,L) = 
  \overline{\{x\in Y^*\setminus Y_0 : \D L(x),\D f(x) 
    \textnormal{ are linearly dependent in } \Omega^1_{Y^*}\}},
  \label{eqn:DefinitionPolarLocus}
\end{equation}
where $\overline{\cdot}$ denotes the closure. The polar locus can 
be very nasty. However for most choices of the projection $L$ we get a 
reasonable control over $\Gamma$.

\begin{lemma}
  For $a= (a_2,\dots,a_N)$ let 
  \[
    L_a : \CC^N \to \CC, \quad (x_1,\dots,x_N) \mapsto x_1 - \sum_{i=2}^N a_i \cdot x_i
  \]
  be the \textit{projection bent by} $a$. 
  There exists a dense set $\Omega \subset \CC^{N-1}$ of values 
  for $a$, such that for $a \in \Omega$ the polar locus 
  $\Gamma(f,L_a) \subset Y^*$ is either empty or an analytic curve, which is smooth 
  outside $Y_0$. 
  \label{lem:ChoiceProjectionPolarCurve}
\end{lemma}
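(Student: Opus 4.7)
The plan is to apply a Sard--Bertini argument to a universal version of the polar variety. First I would introduce the incidence variety
\[
  \tilde\Gamma \;=\; \{(x,a)\in (Y^*\setminus Y_0)\times \CC^{N-1}:\;
  \D f(x)|_{T_xY^*}\text{ and }\D L_a(x)|_{T_xY^*}\text{ are linearly dependent}\},
\]
which is cut out locally by the $2\times 2$ minors of the matrix expressing $\D f$ and $\D L_a$ in any local frame of $TY^*$. Since $\D L_a = \D x_1 - \sum_{i=2}^N a_i\,\D x_i$ depends linearly on $a$, these defining equations are linear in the $a$-variables. The second projection $\pi_2\colon\tilde\Gamma\to\CC^{N-1}$ has fiber $\pi_2^{-1}(a) = \Gamma(f,L_a)\cap(Y^*\setminus Y_0)$, so the lemma reduces to producing a dense set of $a$ over which these fibers are smooth curves or empty.

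The next step is to analyze the first projection $\pi_1\colon\tilde\Gamma\to Y^*\setminus Y_0$. Let $U\subset Y^*\setminus Y_0$ be the open set where $\D f(x)|_{T_xY^*}\neq 0$; its complement is the locus where $\D f\in N^*_xY^* = \Span(\D f_2(x),\dots,\D f_d(x))$. This complement is proper, since $Y_0 = Y^*\cap\{f=0\}$ is smooth off the codimension-two subset $V$, which forces the tuple $(\D f,\D f_2,\dots,\D f_d)$ to have full rank $d$ on some point of each irreducible component of $Y^*\setminus Y_0$. For $x\in U$ the fiber $\pi_1^{-1}(x)$ consists of those $a$ with
\[
  (1,-a_2,\dots,-a_N)\;\in\;\CC\cdot\D f(x) + N^*_xY^*\;\subset\;(\CC^N)^*,
\]
which is the intersection of a linear subspace of dimension $d$ with the affine hyperplane $\{\eta : \eta(e_1)=1\}$ parametrising the differentials $\D L_a$. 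Generically (in $x\in U$) this intersection is an affine subspace of dimension $d-1$, so $\pi_1$ restricts to an affine bundle on $\pi_1^{-1}(U)$ over a dense open subset of $U$. In particular $\tilde\Gamma$ is smooth there of pure dimension $(n+1)+(d-1)=N$.

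Finally I would apply the holomorphic Sard theorem to $\pi_2$ restricted to this smooth locus. If $\pi_2$ is not dominant then its image is contained in a proper analytic subset of $\CC^{N-1}$, and $\Gamma(f,L_a)\cap(Y^*\setminus Y_0)$ is empty for $a$ in the dense complement. Otherwise, the set $\Omega$ of regular values of $\pi_2$ is dense in $\CC^{N-1}$, and each fiber over $\Omega$ is a smooth analytic subvariety of pure dimension $N-(N-1)=1$. Taking closure in $Y^*$ then yields $\Gamma(f,L_a)$ as an analytic curve which is smooth outside $Y_0$ by construction.

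The step I expect to be most delicate is the dimension estimate and smoothness check for $\pi_1^{-1}(U)$: it hinges on verifying that the $2\times 2$ minor equations cut $\tilde\Gamma$ out smoothly at generic points, which is a rank computation that uses both the linearity in $a$ and the nonvanishing of $\D f|_{T_xY^*}$ on $U$. Some care is needed with the Pl\"ucker-type redundancies among the minors where the rank drops further, but away from those degenerate loci the argument is clean, and they lie in a strictly smaller-dimensional analytic subset which does not affect the Sard conclusion.
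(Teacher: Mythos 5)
Your proposal is correct and follows essentially the same route as the paper: both set up the incidence variety of pairs $(x,a)$ for which $\D f$ and $\D L_a$ are dependent in $\Omega^1_{Y^*}$, observe it is smooth of dimension $N$ over $Y^*\setminus Y_0$, and apply Sard to the second projection. The one place you hedge --- a possible lower-dimensional degenerate locus where $\D f|_{T_xY^*}$ vanishes, which as stated would still leave finitely many potentially singular points on a generic fiber --- is handled in the paper by noting that after shrinking the representative $f$ has no critical points on $Y^*\setminus Y_0$ at all, so the incidence space is smooth everywhere there and the regular fibers are smooth outside $Y_0$ without further argument.
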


\begin{proof}
  This is a Bertini-type theorem. Consider the following incidence space 
  \[
    N^*:= \{ (x,a) \in Y^* \times \CC^{N-1} : 
    \D L_a(x), \D f(x) 
    \textnormal{ are linearly dependent in } \Omega^1_{Y^*} \}.
  \]
  It comes along with the two natural projections
  \[
    \xymatrix{
      & 
      N^* \ar[dl]_{pr_1} \ar[dr]^{pr_2} & 
      \\
      Y^* & 
      & 
      \CC^{N-1}\\
    }
  \]
  Over $Y^*\setminus Y_0$ the function $f$ does have a full rank differential on 
  $TY^*$ and therefore $N^*\setminus pr_1^{-1}(Y_0)$ is a smooth manifold 
  of complex dimension 
  \[
    \dim N^* = \dim Y^* + (N-1) - (\dim Y^* -1 ) = N.
  \]
  Let $a$ be a regular value of the projection $pr_2$ restricted to 
  $N^* \setminus pr_1^{-1}(Y_0)$. Its preimage
  \[
    pr_2^{-1}(\{a\}) \subset Y^* \times \{a\}
  \]
  is either empty or an analytic curve, which is smooth outside $Y_0 \times \{a\}$. 
\end{proof}

We will in the following assume that $L_a$ has been chosen according to Lemma 
\ref{lem:ChoiceProjectionPolarCurve}. 
Then we readjust the coordinate system of $\CC^N$ in a way that $L_a = L = x_1$ 
is just the first coordinate function, i.e. the projection to the first axis.

\begin{corollary}
  Passing to a smaller representative of $Y_0$ if necessary, 
  we can furthermore assume that 
  the polar curve meets $Y_0$ only at points in $V$. 
  \label{cor:ChoiceProjectionPolarCurve}
\end{corollary}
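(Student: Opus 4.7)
The plan is to combine the dimensional bound on $\Gamma$ from Lemma~\ref{lem:ChoiceProjectionPolarCurve} with a standard shrinking argument based on the local finiteness of zero-dimensional analytic sets. The key preliminary observation is that, by the way $\Gamma$ is defined as a closure, no irreducible component of $\Gamma$ can lie inside $Y_0$.

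First, I would unpack the definition of $\Gamma$. Since $\Gamma$ is by construction the closure in $Y^*$ of an analytic subset of the open subset $Y^* \setminus Y_0$, every irreducible component of $\Gamma$ must have non-empty intersection with $Y^* \setminus Y_0$. In particular no irreducible component of $\Gamma$ is contained in $Y_0$.

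Next, I would invoke Lemma~\ref{lem:ChoiceProjectionPolarCurve} to conclude that $\Gamma$ is either empty or purely one-dimensional. Combined with the previous step this forces $\Gamma \cap Y_0$ to be a proper analytic subset of a purely one-dimensional analytic set, hence zero-dimensional. A zero-dimensional analytic subset is locally finite, so $\Gamma \cap Y_0$ consists of only finitely many points inside any relatively compact neighborhood of the origin.

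Finally, I would shrink the representative. In a fixed initial Milnor ball around $0$, list the finitely many points of $\Gamma \cap Y_0$; those not lying on $V$ form a finite subset of $Y^*$ that avoids the origin, since $0 \in V$. Such exceptional points are therefore bounded below in distance from the origin by some $\rho>0$, and passing to a Milnor ball of radius smaller than $\rho$ excludes all of them. In this smaller representative every remaining point of $\Gamma \cap Y_0$ lies on $V$, as required. The whole argument is essentially routine; the only subtle point is the opening observation that the closure operation defining $\Gamma$ cannot introduce extra components sitting inside $Y_0$. Once that is granted, everything else follows from local finiteness of zero-dimensional analytic sets and a standard choice of Milnor radius.
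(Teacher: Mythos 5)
Your proof is correct; the paper states this corollary without proof, and your argument (no component of $\Gamma$ lies in $Y_0$ since $\Gamma$ is the closure of a set in $Y^*\setminus Y_0$, hence $\Gamma\cap Y_0$ is zero-dimensional and locally finite, and the finitely many intersection points off $V$ are bounded away from the origin and can be excluded by shrinking) is exactly the routine deduction from Lemma~\ref{lem:ChoiceProjectionPolarCurve} that the author leaves implicit. Your flagged subtlety --- that the closure operation cannot create components inside $Y_0$ --- is handled correctly and is indeed the only point needing care.
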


\subsection{The choice of a Milnor ball}

Let $\rho : \CC^N \to \RR$ be the squared distance function from the origin 
and set $B_\varepsilon := \{ \rho \leq \varepsilon \}$.
For sufficiently small $\varepsilon > 0$ we may assume that 
\begin{itemize}
  \itemsep0cm
  \item $\rho$ is a Whitney stratified submersion on 
    $Y^* \cap B_\varepsilon$ with respect to the standard 
    stratification (\ref{eqn:WhitneyStratificationLocal}).
  \item (cf. \cite{Hamm72}, Korollar 3.2) the function 
    \[
      \arg f : Y^*\setminus Y_0 \to S^1
    \]
    has a differential which is linearly independent of 
    $\D \rho$ in $T^*Y^*$ along $B_\varepsilon \cap (Y^*\setminus Y_0)$.
  \item the function $f$ has no critical points on 
    $B_\varepsilon \cap Y^*$ away from $Y_0$. 
  \item the polar curve $\Gamma$ is either empty or 
    it intersects $B_\varepsilon \cap V$ at most at the origin.
\end{itemize}

\subsection{The Milnor fiber in the product case}
\label{subsec:MilnorFiberProductCase}

In this section we will treat the case that 
\[
  (Y^*\setminus Y_0, Y_0 \setminus V, V)
\]
is already a Whitney stratification of $Y^*$ at $0$ and the polar curve 
$\Gamma$ is empty. 

Thom's first isotopy lemma yields that $Y_0$ is a product over $V$: 
\begin{equation}
  (Y_0,0) \cong (Y_0^\pitchfork \times V,0),
  \label{eqn:ProductStructureSingularFiber}
\end{equation}
where $(Y_0^\pitchfork,0)$ is the germ of the \textit{transversal singularity}. 
This is an isolated singularity obtained from $Y_0$ by intersecting it with 
a hyperplane in general position, i.e. transversal to all strata at $0$. 
Lemma \ref{lem:ChoiceProjectionPolarCurve} and corollary 
\ref{cor:ChoiceProjectionPolarCurve} show us, how to choose the equation for 
such a hyperplane. 

We will show that the product structure (\ref{eqn:ProductStructureSingularFiber}) 
also holds for the Milnor fiber. To do so, it is more convenient to have a polydisc 
rather than a Milnor ball. 
Assume that the projection $L = x_1$ to the first axis is general and let 
\[
  q = \sum_{i=2}^N \overline x_i \cdot x_i : \CC^N \to \RR
\]
be the squared distance from $V$. 
According to \cite{TopStab76}, lemma 2.3, the map 
\[
  (L, q ) : Y_0 \setminus V \to V \times \RR
\]
is a submersion on a neighborhood $U$ of the origin. 
We may choose $\alpha, \beta \in \RR_{>0}$ small enough such that 
the polydisc 
\[
  \Delta_{\alpha \beta} := \{ q \leq \alpha^2\} \cap \{|L|\leq \beta\}
\]
is contained in $U$.

\begin{theorem}
  In the above setup for fixed $\alpha$ and $\beta$ there exists a $\delta >0$ 
  such that the map 
  \begin{equation}
    (f,L) : Y^* \cap \Delta_{\alpha \beta} \cap f^{-1}(D_\delta) \to 
    D_\delta \times D_\beta
    \label{eqn:FLSubmersion}
  \end{equation}
  is a fiber bundle away from $Y_0 = Y^* \cap \{ f=0 \}$. 
  \label{thm:MilnorFiberProductCase}
\end{theorem}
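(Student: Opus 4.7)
The plan is to apply Ehresmann's fibration theorem in its stratified (manifold-with-corners) version, or equivalently Thom's first isotopy lemma, to $(f,L)$ viewed as a proper stratified submersion on $Y^* \cap \Delta_{\alpha\beta} \cap f^{-1}(\bar D_\delta \setminus \{0\})$. The domain carries a natural manifold-with-corners structure with boundary faces $\{q = \alpha^2\}$ and $\{|L| = \beta\}$, and I need to verify that $(f,L)$ is submersive on the interior and on each boundary stratum, for $\delta$ chosen small enough.

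On the interior, Lemma \ref{lem:SingularLocusYStar} gives that $Y^*$ is smooth on $Y^* \setminus Y_0$, and the hypothesis that the polar curve $\Gamma(f,L)$ is empty means that $\D f$ and $\D L$ are linearly independent in $\Omega^1_{Y^*}$ at every such point, so $(f,L)$ is a submersion there. The face $\{|L|=\beta\}$ is harmless because $L$ is itself a target coordinate: every fiber $(f,L)^{-1}(c,L_0)$ lies inside $\{L=L_0\}$, so the restriction of $(f,L)$ to $Y^*\cap\{|L|=\beta\}$ is automatically submersive onto $\CC\times\{|L|=\beta\}$ once interior submersion is known.

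The substantive point is the distance face $K := Y^* \cap \{q=\alpha^2\} \cap \{|L|\leq \beta\}$, where I need $(f,L,q):Y^*\to \CC\times\CC\times\RR$ to be a submersion for $0<|f|\leq \delta$. On $K\cap Y_0$ this reduces to a rank count. At such a point $p$ we have $q(p)=\alpha^2>0$, hence $p\notin V$ and $Y^*$ is smooth at $p$; moreover $\D f|_{T_pY^*}\neq 0$, for otherwise $Y_0$ would be singular at $p$, contradicting $\Sing(Y_0)=V$. Thus $T_pY_0=\ker(\D f|_{T_pY^*})$ has real codimension $2$ in $T_pY^*$, while $(L,q)|_{T_pY_0}$ has maximal real rank $3$ by the submersion hypothesis on $(L,q)$ taken from \cite{TopStab76}. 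Summing gives $(f,L,q)$ submersive on $K\cap Y_0$. Openness of the submersion condition together with compactness of $K\cap Y_0$ extends this to an open neighborhood $W\supset K\cap Y_0$ inside $K$, and on the compact complement $K\setminus W$, which is disjoint from $Y_0$, the function $|f|$ attains a strictly positive minimum $\delta_0>0$. Any $\delta<\delta_0$ then produces the required boundary submersion on $K\cap f^{-1}(\bar D_\delta\setminus\{0\})$.

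Properness of $(f,L)$ on the full domain follows from compactness of $\Delta_{\alpha\beta}$, and the stratified Ehresmann theorem then yields the claimed fiber bundle structure over $D_\delta^*\times D_\beta$. The main obstacle in this plan is the transition from submersion of $(L,q)$ at $f=0$ to submersion of $(f,L,q)$ for small nonzero $f$: without the input from \cite{TopStab76} there would be no reason for the distance boundary $\{q=\alpha^2\}$ to stay transverse to the fibers of $(f,L)$ near the central fiber, and the rest of the argument would collapse.
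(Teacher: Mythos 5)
Your proposal is correct and follows essentially the same route as the paper: the emptiness of the polar curve handles the interior, the submersivity of $(L,q)$ on $Y_0$ from \cite{TopStab76} together with compactness and openness of the submersion condition controls the boundary face $\{q=\alpha^2\}$ for $\delta$ small, and Ehresmann's theorem concludes. You merely spell out more explicitly (via the rank count at $K\cap Y_0$ and the choice of $\delta$ below the minimum of $|f|$ on the complement) what the paper compresses into ``this property is preserved under small perturbations of $f$''.
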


\begin{definition}
  The fiber of (\ref{eqn:FLSubmersion}) over a general point is called the 
  \textit{transversal Milnor fiber} and denoted by $F^\pitchfork$. 
\end{definition}

\noindent
Clearly for fixed $\delta >0$ we have 
$F \cong F^\pitchfork \times D_\beta$.

\begin{proof}{(of theorem \ref{thm:MilnorFiberProductCase})}
  Since $(L,q)$ was a submersion on $Y_0 \cap \Delta_{\alpha \beta}$, the 
  horizontal part of the boundary 
  \[
    \partial_h ( Y_0 \cap \Delta_{\alpha \beta} ) := 
    Y_0 \cap \{ q = \alpha^2\} \cap \{|L|\leq \beta\}
  \]
  is a fiber bundle over the closed disc $D_\beta$. Because it is compact, 
  this property is preserved under small perturbations of $f$. Hence we can 
  assume that 
  \[
    (f,L) : Y^* \cap f^{-1}(D_\delta) \cap \{ q=\alpha^2\} \cap L^{-1}(D_\beta)
    \to D_\delta \times D_\beta
  \]
  is a fiber bundle. 
  
  The absence of the polar curve assures that away from $Y_0$ we also find no critical 
  points of $(f,L)$ in the interior of $Y^* \cap \Delta_{\alpha \beta}$. 
  Therefore (\ref{eqn:FLSubmersion}) is a proper submersion away from $Y_0$ and 
  hence a fiber bundle by Ehresmann's fibration theorem.
\end{proof}

\subsection{The Milnor fiber at a special point}
\label{subsec:MilnorFiberSpecialPoint}

We now treat the general case, i.e. we have a Whitney stratification of 
$Y^* \subset \CC^N$ by the strata 
\[
  (Y^*\setminus Y_0, Y_0 \setminus V, V\setminus\{0\},\{0\})
\]
and a possibly nonempty polar curve $\Gamma \subset Y^*$ which meets $Y_0$ at $\{0\}$. 
By passing to smaller representatives if necessary, we can always reduce to 
this setup. 

Let $B$ be a Milnor ball for $Y_0$ at $0$. 
When we investigate the topology at the special points in the setting of the 
Tjurina-modification of an ICMC2 singularity, it is the part of the boundary 
$\Sigma = Y_0 \cap \partial B$
which is close to $V$, along which $Y_0$ connects to the remaining space. Therefore 
we will study mainly two objects in this section: The topology of the second boundary 
\[
  \partial_2 F \subset \partial F, 
\]
which is the part of the boundary of the Milnor fiber $F$ close to $V$, 
and the relative homology groups 
\[
  H_q(F,\partial_2 F),
\]
which determine how $F$ is connected to $\partial_2 F$. 
The precise definition of the second boundary $\partial_2 F$ is given below.

\subsubsection{The second boundary}
\label{subsec:TheSecondBoundary}

In this section we will denote the boundaries of the spaces in question by 
\[
  \Sigma^* := Y^* \cap \partial B, \quad \Sigma := Y_0 \cap \partial B, \quad 
  S := V \cap \partial B
\]
Along the points of $S$ we find the product situation of the preceeding 
section for $Y_0$. Thus theorem \ref{thm:MilnorFiberProductCase} is applicable along the 
whole circle. 
However we do need the slight modification to change $L$ to 
\[
  \tilde L : B \to \CC, \quad x \mapsto \sqrt{\rho(x)} \cdot \exp( \sqrt{-1}\cdot \arg L ),
\]
with $\rho = |L|^2 + q$ the squared distance from the origin.
The function $\tilde L$ is not holomorphic, 
but approximates $L$ as a differentiable 
function close to $S$. 
Repeating the arguments in the setup and proof of theorem \ref{thm:MilnorFiberProductCase} 
along the compact manifold $S$ we obtain:

\begin{corollary}
  There exist $\alpha>0$ and $\delta>0$ sufficiently small with respect to $\alpha$ 
  such that 
  \begin{equation}
    (f, \arg L ) : \Sigma^* \cap \{ q \leq \alpha^2\} \cap f^{-1}(D_\delta) 
    \to D_\delta \times S^1
    \label{eqn:FullFibrationSecondBoundary}
  \end{equation}
  is a smooth fiber bundle away from $\{ f = 0 \}$. 
  \label{cor:FibrationSecondBoundary}
\end{corollary}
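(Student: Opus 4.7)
The plan is to reduce the statement to a fibered application of Theorem \ref{thm:MilnorFiberProductCase} along the compact circle $S = V \cap \partial B$, combined with a transversality argument for $\partial B$ and an invocation of Ehresmann's fibration theorem.

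At any point $s_0 \in S$ we are away from the origin, so the Whitney stratification of $Y^*$ reduces locally to the product-case stratification $(Y^*\setminus Y_0, Y_0\setminus V, V)$, and by Corollary \ref{cor:ChoiceProjectionPolarCurve} the polar curve $\Gamma$ does not meet $Y_0$ at $s_0$. Theorem \ref{thm:MilnorFiberProductCase} therefore supplies a polydisc $\Delta_{\alpha(s_0),\beta(s_0)}$ around $s_0$ and a $\delta(s_0)>0$ on which $(f,L)$ is a fiber bundle away from $Y_0$. Covering the compact circle $S$ by finitely many such polydiscs, I would extract uniform constants $\alpha,\delta$ so that $(f,L)$ is a fiber bundle on the tube $Y^*\cap\{q\leq\alpha^2\}\cap f^{-1}(D_\delta)$ in a neighborhood of $S$, away from $Y_0$.

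To pass from $L$ to $\arg L$ and restrict to $\Sigma^*=Y^*\cap\partial B$, I would use the modified projection $\tilde L = \sqrt{\rho}\cdot \exp(\sqrt{-1}\cdot \arg L)$. Since $L=x_1$ vanishes on $V$ only at the origin, $\arg L$ is well defined on a neighborhood of $S$, and $\tilde L$ is a smooth function whose argument agrees with that of $L$ while its modulus equals $\sqrt{\varepsilon}$ on $\partial B$. Submersivity of $(f,\arg L)$ on $\Sigma^*\cap\{q\leq\alpha^2\}\cap f^{-1}(D_\delta)\setminus Y_0$ then follows from the Milnor-ball assumptions: $\rho$ is a Whitney-stratified submersion on $Y^*\cap B_\varepsilon$ and $\D(\arg f)$ is linearly independent of $\D\rho$ in $T^*Y^*$ along $\partial B\cap(Y^*\setminus Y_0)$, so $\partial B$ meets the fibers of $(f,L)$ transversely; restricting $(f,L)$ to $\partial B$ kills precisely the $\D|L|$-direction and leaves $(f,\arg L)$ as a submersion.

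Finally, the map in question is proper onto $D_\delta\times S^1$: the source is cut out of the compact set $\Sigma^*\cap\{q\leq\alpha^2\}\cap f^{-1}(\overline{D_\delta})$ by real-analytic inequalities, giving a manifold with corners. The corner $\{q=\alpha^2\}\cap\partial B$ is itself fibered over $D_\delta\times S^1$ by applying Theorem \ref{thm:MilnorFiberProductCase} along $S$ on the boundary face, so Ehresmann's fibration theorem in its version for manifolds with corners yields the claimed fiber bundle structure away from $\{f=0\}$. The step I expect to be the main technical obstacle is this last bookkeeping: one must check the simultaneous transversality of $\partial B$ and $\{q=\alpha^2\}$ with respect to $(f,L)$, which amounts to arguing that the two radial controls interact compatibly along the codimension-two corner and, in particular, that $\delta$ can be chosen small enough relative to $\alpha$ to preclude escape of the fibers through either boundary stratum.
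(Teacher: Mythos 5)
Your proposal is correct and follows essentially the same route as the paper: the paper likewise obtains the corollary by repeating the setup and proof of Theorem \ref{thm:MilnorFiberProductCase} along the compact circle $S$, replacing $L$ by the modified projection $\tilde L = \sqrt{\rho}\cdot\exp(\sqrt{-1}\cdot\arg L)$ so that restriction to $\Sigma^*$ yields the fibration by $(f,\arg L)$. The extra care you take with the corner $\{q=\alpha^2\}\cap\partial B$ and the uniformity of $\alpha,\delta$ is exactly the bookkeeping the paper leaves implicit.
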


It is easy to see that the fiber of this fiber bundle is canonically diffeomorphic 
to the transversal Milnor fiber $F^\pitchfork$. 

\begin{definition}
  For $\alpha$ and $\delta$ as in corollary \ref{cor:FibrationSecondBoundary} the 
  space 
  \[
    \partial_2 F := \Sigma^* \cap \{ q \leq \alpha^2 \} \cap \{f = \delta\}
  \]
  is called the \textit{second boundary} of the Milnor fiber $F$
  and the monodromy from the fibration 
  \begin{equation}
    \arg L : \partial_2 F \to S^1
    \label{eqn:FibrationSecondBoundary}
  \end{equation}
  the \textit{vertical monodromy}.
\end{definition}

The topology of $\partial_2 F$ is completely determined by the 
topology of $F^\pitchfork$ and the monodromy operator of the Wang sequence of 
(\ref{eqn:FibrationSecondBoundary}). 
The transversal Milnor fiber $F^\pitchfork$ comes from an ICIS of dimension 
$n-1$, so it is $(n-2)$-connected. For $n \geq 3$ the Wang sequence splits into two parts
\begin{equation}
  \xymatrix{
    0 \ar[r] &
    H_n(\partial_2 F) \ar[r] & 
    H_{n-1}( F^\pitchfork ) \ar[r]^{\mathbf T_{n-1}- \mathbf 1} &
    H_{n-1}( F^\pitchfork ) \ar[r] & 
    H_{n-1}(\partial_2 F) \ar[r] &
    0
  }
  \label{eqn:WangSequenceVerticalMonodromyI}
\end{equation}
and
\begin{equation}
  \xymatrix{
    0 \ar[r] &
    H_1(\partial_2 F ) \ar[r] &
    H_0(F^\pitchfork) \ar[r]^{\mathbf T_0-\mathbf 1} &
    H_0(F^\pitchfork) \ar[r] &
    H_0(\partial_2 F ) \ar[r] &
    0
  }
  \label{eqn:WangSequenceVerticalMonodromyII}
\end{equation}
where $\mathbf T_\bullet$ is the monodromy operator of (\ref{eqn:FibrationSecondBoundary}).
Clearly, $\mathbf T_0 -\mathbf 1$ in 
(\ref{eqn:WangSequenceVerticalMonodromyII}) 
is the zero map. Thus we proved the following.

\begin{corollary}
  Let $n= \dim(Y_0,0) \geq 3$. 
  The homology groups of $\partial_2 F$ have the following properties:
  \begin{enumerate}
    \item $H_n(\partial_2 F)$ is a free subgroup of $H_{n-1}(F^\pitchfork)$.
    \item Every cycle in $H_{n-1}(\partial_2 F)$ can be represented 
      by a cycle in $H_{n-1}(F^\pitchfork)$. 
    \item $H_1(\partial_2 F)$ is free abelian of rank $1$ and generated by a 
      section of $\arg L$.
    \item $\partial_2 F$ is connected.
    \item All other homology groups are zero.
  \end{enumerate}
  \label{cor:TopologySecondBoundary}
\end{corollary}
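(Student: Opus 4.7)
The plan is to read all five statements directly off the two Wang sequences (\ref{eqn:WangSequenceVerticalMonodromyI}) and (\ref{eqn:WangSequenceVerticalMonodromyII}), using a single topological input: since $F^\pitchfork$ is the Milnor fiber of an $(n-1)$-dimensional ICIS, it is a bouquet of $(n-1)$-spheres by Hamm's theorem, hence $(n-2)$-connected with $H_{n-1}(F^\pitchfork)$ finitely generated and free.

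First I would dispose of (1) and (2) from (\ref{eqn:WangSequenceVerticalMonodromyI}): it identifies $H_n(\partial_2 F)$ with $\ker(\mathbf T_{n-1} - \mathbf 1)$, which is a subgroup of the free abelian group $H_{n-1}(F^\pitchfork)$ and hence itself free; and the surjection $H_{n-1}(F^\pitchfork) \twoheadrightarrow H_{n-1}(\partial_2 F)$ gives (2) verbatim. Next, from (\ref{eqn:WangSequenceVerticalMonodromyII}) together with $\mathbf T_0 - \mathbf 1 = 0$, both $H_0(\partial_2 F)$ and $H_1(\partial_2 F)$ reduce to copies of $\ZZ$, yielding (4) and the group-theoretic content of (3). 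For the vanishing in (5), for $2 \leq q \leq n-2$ the corresponding Wang fragment has both outer terms zero by $(n-2)$-connectedness of $F^\pitchfork$, while for $q \geq n+1$ both terms vanish on dimensional grounds; when $n=3$ the middle range is empty and there is nothing to check.

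The one step requiring genuine geometric input is the identification of the generator of $H_1(\partial_2 F)$ with a section of $\arg L$. I would argue as follows: presenting $\partial_2 F \to S^1$ as a mapping torus of the vertical monodromy $\mathbf T$ acting on $F^\pitchfork$, the path-connectedness of $F^\pitchfork$ (using $n-1 \geq 2$) yields a path from any basepoint $p_0 \in F^\pitchfork$ to $\mathbf T^{-1}(p_0)$, which descends to a continuous section $\sigma : S^1 \to \partial_2 F$ of $\arg L$. The relation $(\arg L) \circ \sigma = \mathrm{id}_{S^1}$ forces $(\arg L)_* \circ \sigma_* = \mathrm{id}$ on $H_1(S^1) \cong \ZZ$; since both $H_1(S^1)$ and $H_1(\partial_2 F)$ are infinite cyclic, this forces $\sigma_*$ to be an isomorphism and so $[\sigma]$ generates. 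The remaining work is bookkeeping, and I do not expect any serious obstacle beyond verifying that the polar-curve hypotheses from the previous subsections are invoked correctly to justify the fibration (\ref{eqn:FibrationSecondBoundary}) — which is already established in corollary \ref{cor:FibrationSecondBoundary}.
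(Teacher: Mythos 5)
Your proposal is correct and follows essentially the same route as the paper: both read all five statements off the Wang sequences (\ref{eqn:WangSequenceVerticalMonodromyI}) and (\ref{eqn:WangSequenceVerticalMonodromyII}) using the $(n-2)$-connectedness of $F^\pitchfork$ from Hamm's theorem and the vanishing of $\mathbf T_0 - \mathbf 1$. Your explicit mapping-torus construction of the section of $\arg L$ and the verification that its class generates $H_1(\partial_2 F)$ is a detail the paper leaves implicit, and it is carried out correctly.
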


If $n = \dim(Y_0,0) = 2$, the terms $H_{n-1}(\partial_2 F)$ from 
(\ref{eqn:WangSequenceVerticalMonodromyI}) and $H_1(\partial_2 F)$ in 
(\ref{eqn:WangSequenceVerticalMonodromyII}) come together. But the kernel 
of $\mathbf T_0 - \mathbf 1$ in is still 
free of rank $1$ and hence there is a (non-canonical) splitting
\begin{equation}
  H_1(\partial_2 F ) \cong H_1' \oplus \ZZ = \coker (T_1-\mathbf 1) \oplus \ker (T_0-\mathbf 1).
  \label{eqn:SplittingH1SecondBoundarySurfaceCase}
\end{equation}
We call $H_1' = \coker T_1- \mathbf 1$ the \textit{transversal} or 
\textit{horizontal} and the other summand 
$\ZZ = \ker T_0 - \mathbf 1$ the \textit{vertical} cycles of the second boundary 
$\partial_2 F$. 

\begin{corollary}
  The homology groups of the second boundary $\partial_2 F$ of the Milnor 
  fiber $F$ of a complete intersection line singularity $(Y_0,0)$ of dimension 
  $2$ have the following properties:
  \begin{enumerate}
    \item $H_2(\partial_2 F)$ is a free subgroup of $H_1(F^\pitchfork)$.
    \item Every cycle in $H_{1}(\partial_2 F)$ can be represented 
      by a transversal cycle in $H_{1}(F^\pitchfork)$. 
    \item $H_1(\partial_2 F)$ 
      splits into transversal and vertical cycles 
      (\ref{eqn:SplittingH1SecondBoundarySurfaceCase}) and a generator 
      of the latter is given by the fundamental class of a section of $\arg L$.
    \item $\partial_2 F$ is connected.
    \item All other homology groups are zero.
  \end{enumerate}
  \label{cor:TopologySecondBoundarySurface}
\end{corollary}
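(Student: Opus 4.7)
The plan is to repeat the argument already carried out in the proof of corollary \ref{cor:TopologySecondBoundary}, but keep careful track of how the two parts of the Wang sequence merge when $n=2$. Because the statement concerns only the topology of $\partial_2 F$, everything is driven by the fiber bundle (\ref{eqn:FibrationSecondBoundary}), whose existence and smoothness over all of $S^1$ has already been established in corollary \ref{cor:FibrationSecondBoundary}.

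First I would set up the Wang sequence of the fibration $\arg L \colon \partial_2 F \to S^1$ with fiber $F^\pitchfork$. For $n=2$ the transversal singularity is a $1$-dimensional ICIS, so by Hamm's theorem $F^\pitchfork$ is homotopy equivalent to a bouquet of circles; in particular its only nonzero homology groups are $H_0(F^\pitchfork)\cong \ZZ$ and $H_1(F^\pitchfork)\cong \ZZ^\mu$. Therefore the full Wang sequence collapses to the single long exact sequence
\[
 0 \to H_2(\partial_2 F) \to H_1(F^\pitchfork) \xrightarrow{\mathbf T_1 - \mathbf 1} H_1(F^\pitchfork) \to H_1(\partial_2 F) \to H_0(F^\pitchfork) \xrightarrow{\mathbf T_0 - \mathbf 1} H_0(F^\pitchfork) \to H_0(\partial_2 F) \to 0,
\]
and all higher homology of $\partial_2 F$ vanishes, which gives item (5).

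Next I would read off each assertion from this sequence. Since $F^\pitchfork$ is connected, $\mathbf T_0 - \mathbf 1 = 0$, which immediately gives $H_0(\partial_2 F) \cong \ZZ$ (item (4)). The left end of the sequence identifies $H_2(\partial_2 F)$ with $\ker(\mathbf T_1 - \mathbf 1)$, a subgroup of the free abelian group $H_1(F^\pitchfork)$ and therefore itself free; this yields item (1). In the middle we obtain a short exact sequence
\[
 0 \to \coker(\mathbf T_1 - \mathbf 1) \to H_1(\partial_2 F) \to \ker(\mathbf T_0 - \mathbf 1) \to 0,
\]
and because the right-hand group is $H_0(F^\pitchfork) \cong \ZZ$ is free, the sequence splits, producing the (non-canonical) decomposition (\ref{eqn:SplittingH1SecondBoundarySurfaceCase}) claimed in item (3). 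Item (2) then follows immediately: every class in $H_1(\partial_2 F)$ lifts, either through the cokernel or through the section-induced summand, to a class in $H_1(F^\pitchfork)$.

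The only step that genuinely needs a geometric argument rather than diagram chasing is the identification of a generator of the $\ZZ$-summand with the fundamental class of a section of $\arg L$. I would argue this as follows: a section $\sigma \colon S^1 \to \partial_2 F$ of (\ref{eqn:FibrationSecondBoundary}) exists because a single point in $F^\pitchfork$ is fixed up to isotopy by the geometric monodromy, so $\mathbf T_0$ acts trivially on a chosen base point class; the image $[\sigma(S^1)] \in H_1(\partial_2 F)$ maps to the generator of $\ker(\mathbf T_0 - \mathbf 1) = H_0(F^\pitchfork)$ under the connecting map of the Wang sequence, and hence generates the complementary summand. This section-generator is precisely what will couple with $L^\ast([\PP^1]^\vee)$ in the global arguments of section \ref{subsec:TheGlobalPicture}, so highlighting its geometric meaning here is the main technical point; the remaining verifications are pure homological algebra on the Wang sequence.
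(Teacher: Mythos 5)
Your proposal follows essentially the same route as the paper: the corollary is read off from the Wang sequence of the fibration $\arg L : \partial_2 F \to S^1$ of corollary \ref{cor:FibrationSecondBoundary}, observing that for $n=2$ the two pieces (\ref{eqn:WangSequenceVerticalMonodromyI}) and (\ref{eqn:WangSequenceVerticalMonodromyII}) merge into a single exact sequence and that $\ker(\mathbf T_0 - \mathbf 1) \cong \ZZ$ is free, which gives the non-canonical splitting (\ref{eqn:SplittingH1SecondBoundarySurfaceCase}); your identification of the vertical generator with a section of $\arg L$ also matches the paper's intent. One small caution: in your justification of item (2) you claim the vertical summand also ``lifts to a class in $H_1(F^\pitchfork)$,'' which is not true --- under the connecting map that summand maps isomorphically onto $\ker(\mathbf T_0 - \mathbf 1) \subset H_0(F^\pitchfork)$ and is not in the image of $H_1(F^\pitchfork)$; item (2) concerns the transversal summand $H_1' = \coker(\mathbf T_1 - \mathbf 1)$, which is a quotient of $H_1(F^\pitchfork)$ and therefore consists exactly of the classes represented by cycles in the transversal fiber.
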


\subsubsection{Connectivity of the Milnor fiber with the second boundary}

Having described the topology of the second boundary we now turn to the question, 
how it connects with the Milnor fiber. We will first treat the case 
$n = \dim(Y_0,0) \geq 3$ and modify the arguments for the surface case in the next 
section.

\begin{theorem}
  Let $n=\dim Y_0 \geq 3$. Then we have 
  \begin{equation}
    H_q(F, \partial_2 F ) \cong 
    \begin{cases}
      0 & 2 < q < n\\
      H_{q-1}(\partial_2 F) & q = 2 \\
      0 & 0\leq q \leq 1
    \end{cases}
    \label{eqn:RelativeHomologyGroupsMilnorFiberSecondBoundary}
  \end{equation}
  where the isomorphisms are induced from the long exact sequence 
  of the pair of spaces $(F, \partial_2 F)$.
  \label{thm:ConnectivitySecondBoundary}
\end{theorem}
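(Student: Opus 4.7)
The plan is to compute $H_*(F, \partial_2 F)$ through the long exact sequence of the triple $(F, \partial_2 F, F^\pitchfork)$, where $F^\pitchfork$ is a generic transversal Milnor fiber, embedded as a fiber of the fibration $\arg L : \partial_2 F \to S^1$ from corollary \ref{cor:FibrationSecondBoundary}. This reduces the theorem to computing $(\partial_2 F, F^\pitchfork)$ and $(F, F^\pitchfork)$ separately and then assembling them.

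For the first pair I would feed the long exact sequence of $(\partial_2 F, F^\pitchfork)$ into the Wang sequences (\ref{eqn:WangSequenceVerticalMonodromyI}) and (\ref{eqn:WangSequenceVerticalMonodromyII}). Since $F^\pitchfork$ is the Milnor fiber of an ICIS of dimension $n-1 \geq 2$, it is $(n-2)$-connected, so $H_0(F^\pitchfork) = \ZZ$ and $H_1(F^\pitchfork) = 0$. A direct diagram chase using $T_0 - \mathbf 1 = 0$ then yields $H_q(\partial_2 F, F^\pitchfork) = 0$ for $q \neq 1, n$, together with an isomorphism $H_1(\partial_2 F, F^\pitchfork) \cong H_1(\partial_2 F) \cong \ZZ$ generated by a section of $\arg L$.

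For the second pair, the key claim is the Lefschetz-type vanishing $H_q(F, F^\pitchfork) = 0$ for $q < n$. I would prove this via complex Morse theory on the projection $L : F \to \CC$. By lemma \ref{lem:ChoiceProjectionPolarCurve} and corollary \ref{cor:ChoiceProjectionPolarCurve}, the interior critical points of $L|_F$ are finitely many, all lying on the smooth part of the polar curve $\Gamma$, hence complex-Morse. Sweeping with the subsets $F \cap L^{-1}(D_r)$ from a small disc around $0 \in \CC$ (whose preimage retracts to $F^\pitchfork$ by the product structure near $V$, cf.\ theorem \ref{thm:MilnorFiberProductCase}) out to a disc covering the full $L$-image of $F$, each critical value attaches a single $n$-cell by the standard complex-Morse handlebody construction. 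The boundary behaviour is controlled by the Milnor ball conditions of subsection 2.2 (linear independence of $\D\arg f$ and $\D\rho$, and the polar curve meeting $Y_0$ only at the origin), ensuring that no handles of lower dimension are introduced from $\partial F$.

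Finally, the long exact sequence of the triple
\[
  \cdots \to H_q(\partial_2 F, F^\pitchfork) \to H_q(F, F^\pitchfork) \to H_q(F, \partial_2 F) \to H_{q-1}(\partial_2 F, F^\pitchfork) \to \cdots
\]
combined with the two preceding computations forces $H_q(F, \partial_2 F) = 0$ in the ranges $0 \leq q \leq 1$ and $2 < q < n$, since all four neighboring groups vanish there. At $q = 2$ it produces the isomorphism $H_2(F, \partial_2 F) \cong H_1(\partial_2 F, F^\pitchfork) \cong H_1(\partial_2 F)$, and naturality of the boundary maps identifies it with the connecting homomorphism of the pair $(F, \partial_2 F)$, as required. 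The main obstacle is the Lefschetz vanishing $H_q(F, F^\pitchfork) = 0$ for $q < n$: this is essentially a manifold-with-corners version of the classical Siersma-L\^e-Tibar result for line singularities, and the subtlety lies in verifying that the complex-Morse handle decomposition of $F$ is well-behaved at the two-natured boundary $\partial F = \partial_1 F \cup \partial_2 F$ and that no component of the polar curve escapes pathologically to the boundary.
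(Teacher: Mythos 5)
Your reduction to the triple $(F,\partial_2 F, F^\pitchfork)$ is a legitimate reorganization: the Wang-sequence computation of $H_q(\partial_2 F, F^\pitchfork)\cong H_{q-1}(F^\pitchfork)$ is correct, and the final assembly does deliver the statement \emph{once} the vanishing $H_q(F,F^\pitchfork)=0$ for $q<n$ is in hand. But that vanishing is not a lemma feeding into the theorem -- given the Wang computation it is \emph{equivalent} to the theorem -- and the Morse-theoretic sketch you offer for it breaks down at a concrete point. You start the sweep with $F\cap L^{-1}(D_r)$ for a small disc $D_r$ around $0\in\CC$ and claim this retracts to $F^\pitchfork$ ``by the product structure near $V$''. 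The product structure of theorem \ref{thm:MilnorFiberProductCase} holds only in the product case, i.e.\ away from the special point; over $L=0$ it fails by construction, and $F\cap L^{-1}(D_r)$ contains precisely the non-product part. Worse, the order of choices (Milnor ball first, then $\delta$) forces all critical points of $L|_F$, namely $\Gamma\cap\{f=\delta\}\cap B$, to converge to the origin as $\delta\to 0$, since $\Gamma\cap Y_0\cap B=\{0\}$ by corollary \ref{cor:ChoiceProjectionPolarCurve} and the Milnor ball conditions. Hence \emph{all} critical values of $L|_F$ already lie inside any fixed $D_r$ once $\delta$ is small: your outward sweep attaches nothing, and the starting piece carries the entire topological complexity you were hoping to build up by handle attachment. (A sanity check: for the $D_\infty$ threefold the correct conclusion $(F,F^\pitchfork)\simeq(S^3,S^2)$ cannot come from a cell-free sweep starting at something homotopic to $F^\pitchfork\simeq S^2$.) The geometrically correct version of this Lefschetz statement is L\^e's attaching result, where the $n$-cells are attached \emph{over a small disc around $0$} to the part of $F$ fibered over its complement, and relating that fibered part back to the local transversal fiber $F^\pitchfork$ (a local object in a polydisc around a generic point of $V$, not a global slice $F\cap L^{-1}(c)$ in the big Milnor ball) is itself a nontrivial step you do not address; the boundary control at $\partial F$ that you flag as a ``subtlety'' is a second unresolved issue of the same order.

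For contrast, the paper avoids any direct handle decomposition of $F$: it uses the triple $(\Sigma^*, F\cup T_2, T_2)$ on the boundary sphere of $Y^*\cap B$, identifies the three relative groups with $H_{q-1}(F,\partial F)$, $H_q(F,\partial_2 F)$ and $H_q(\Sigma^*,S)$, kills the first by Andreotti--Frankel, and then computes $H_q(\Sigma^*,S)$ by a case analysis on the singularities of $Y^*$ with an induction on its codimension. That route only ever needs the classical connectivity of $(F,\partial F)$ and of links of ICIS, which is why it closes. If you want to salvage your approach, the missing ingredient is a genuine proof of $H_q(F,F^\pitchfork)=0$ for $q<n$ in this complete-intersection, corner-boundary setting; as written, that step is assumed rather than proved.
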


The proof of theorem \ref{thm:ConnectivitySecondBoundary} follows closely the ideas 
of Dirk Siersma in his paper \cite{Siersma91}. He proved it in the case of 
hypersurfaces, with possibly even more complicated singular locus, as the corollary of 
lemma 3.8, his ``second variation sequence''
\footnote{There is a typo in \cite{Siersma91}: The third case in the mentioned
corollary is $2<q\leq n-1$.}.
It picks up the idea of the original fibration by Milnor 
\begin{equation}
  \arg f : \Sigma^* \setminus \Sigma \to S^1,
  \label{eqn:OriginalMilnorFibration}
\end{equation}
where as before $\Sigma^* = Y^* \cap \partial B$ and $\Sigma$ is the boundary of $Y_0$. 
Hamm shows in \cite{Hamm72}, Satz 1.6, that this is a $C^\infty$-fiber bundle 
with open fibers.
Moreover he proves that for $\delta > 0$ sufficiently small, 
(\ref{eqn:OriginalMilnorFibration}) is 
in fact fiberwise diffeomorphic to 
\begin{equation}
  \frac{f}{\delta}: \{|f| = \delta \} \cap Y^* \cap \overset{\circ}{B} \to S^1.
  \label{eqn:MilnorFibrationTwo}
\end{equation}
The proof proceeds by construction of an outward pointing vector field on 
$Y^* \setminus Y_0$, whose flow takes $\{|f|= \delta \} \cap Y^* \cap B$ fiberwise 
onto $\Sigma^*\setminus \{|f|\leq\delta\}$. For two chosen single fibers we can then 
establish an isomorphism. 

Unlike in the case of an ICIS it is not so easy to see that if we pass to the 
closure in $B$, we still get a fibration.

\begin{lemma}
  For sufficiently small $\delta >0$ the map 
  \begin{equation}
    \frac{f}{\delta} : \{|f| = \delta \} \cap Y^* \cap B \to S^1
    \label{eqn:MilnorFibrationClosedFibers}
  \end{equation}
  is a $C^\infty$ fiber bundle 
  with closed fibers 
  \[
    F = \{f=\delta\} \cap Y^* \cap B.
  \]
  \label{lem:MilnorFibrationClosedFibers}
\end{lemma}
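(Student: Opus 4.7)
The plan is to verify the hypotheses of Ehresmann's fibration theorem for a proper smooth submersion of manifolds with boundary. First, since $Y_0 = \{f=0\}$ and $\delta > 0$, the total space
\[
E := \{|f|=\delta\} \cap Y^* \cap B
\]
is disjoint from the singular locus $V$ of $Y^*$, so it lies entirely in the smooth stratum $Y^*\setminus Y_0$. In particular $E$ is a smooth manifold with boundary, with interior $E\cap \overset{\circ}{B}$ and boundary $\partial E = E \cap \partial B$, and it is compact as a closed subset of the Milnor ball.

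Next I would verify submersivity of $f/\delta$ on both pieces of $E$. On the interior this is already part of Hamm's argument for the open version (\ref{eqn:MilnorFibrationTwo}) and is an immediate consequence of the Milnor-ball condition that $f$ has no critical points on $B \cap Y^*$ away from $Y_0$. On the boundary $\partial E$ I would argue as follows: $\rho$ is a stratified submersion on $Y^* \cap B$, so $Y^*\cap\partial B$ is a smooth submanifold with tangent space $TY^* \cap \ker\D\rho$; the Milnor-ball assumption that $\D\arg f$ and $\D\rho$ are linearly independent in $T^*Y^*$ over $B\cap(Y^*\setminus Y_0)$ then forces $\D\arg f$ to be nonzero on $T(Y^*\cap\partial B)$. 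Combined with the rank-$2$ property of $\D f$ on $TY^*$ away from $Y_0$, this yields surjectivity of $\D\arg f$ on the further subspace $T(Y^*\cap\partial B)\cap\ker\D|f| = T\partial E$, which is exactly submersivity of $f/\delta$ along the boundary.

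With submersivity in hand on both strata, one constructs a smooth lift $\xi$ of the generator $\partial/\partial\theta$ of $TS^1$ to a vector field on a neighborhood of $E$ in $Y^*\cap B$, requiring $\xi$ to be tangent to $\partial B$ along the boundary. Locally away from $\partial B$ one lifts freely using the interior submersivity; locally near $\partial B$ one lifts inside $T(Y^*\cap\partial B)$ using the boundary submersivity; a partition-of-unity argument then patches these local choices into a single global lift tangent to $\partial B$. Integrating $\xi$ over short times gives the local trivializations required by Ehresmann's theorem, showing that $f/\delta : E \to S^1$ is a $C^\infty$ fiber bundle with fiber $F = \{f=\delta\}\cap Y^* \cap B$.

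The main obstacle is precisely arranging the tangency of the lifting vector field to $\partial B$: this is what distinguishes the closed-fiber version from Hamm's open fibration on $\overset{\circ}{B}$, and it is the essential use of the linear independence of $\D\arg f$ and $\D\rho$ in the Milnor-ball hypotheses. Without this independence, the flow of any naive lift could carry points of $\partial B$ off the boundary, so that closed fibers need not be diffeomorphic.
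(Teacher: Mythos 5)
Your overall architecture agrees with the paper's: interior submersivity is immediate from the choice of Milnor ball, the whole content of the lemma is the behaviour at the boundary $\{|f|=\delta\}\cap\Sigma^*$, and Ehresmann's theorem for manifolds with boundary then finishes the job. But your argument for the boundary step has a genuine gap. You claim that the linear independence of $\D\arg f$ and $\D\rho$ in $T^*Y^*$, combined with the rank-$2$ property of $\D f$ on $TY^*$, forces $\D\arg f$ to be nonzero on $T(Y^*\cap\partial B)\cap\ker\D|f|$. This is a non sequitur: writing $\alpha=\D\arg f$, $\beta=\D|f|$, $\gamma=\D\rho$ as covectors on $T_pY^*$, your hypotheses say $\alpha\notin\Span(\gamma)$ and that $\alpha,\beta$ are independent, while the conclusion you need is $\alpha\notin\Span(\beta,\gamma)$; the configuration $\alpha=\beta+\gamma$ with $\beta,\gamma$ independent satisfies both hypotheses and violates the conclusion. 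The same issue already undermines your first paragraph, where you assert without argument that $E$ is a manifold with boundary: that requires $\{|f|=\delta\}$ to meet $\Sigma^*$ transversally inside $Y^*$, i.e. $\beta\notin\Span(\gamma)$ on $T_pY^*$, which is likewise not among the listed Milnor-ball conditions. A telltale sign is that your boundary argument never uses the smallness of $\delta$, whereas the lemma is only claimed, and is only true, for $\delta$ sufficiently small.

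The paper closes exactly this gap by a different mechanism: the curve selection lemma shows that $f$ restricted to $\Sigma^*\setminus\Sigma$ has no critical points in some neighborhood $U$ of $\Sigma$ inside $\Sigma^*$, and compactness of $\Sigma$ then guarantees that $\{|f|\le\delta\}\cap\Sigma^*\subset U$ once $\delta$ is small. Submersivity of $f|_{\Sigma^*}$ as a map to $\CC$ on that region gives simultaneously the transversality of $\{|f|=\delta\}$ with $\Sigma^*$ and the submersivity of $\arg f$ on $\{|f|=\delta\}\cap\Sigma^*$ --- precisely the two facts your vector-field construction needs as input. Note also that the independence of $\D\arg f$ and $\D\rho$ is the hypothesis underlying the sphere fibration (\ref{eqn:OriginalMilnorFibration}); it is not the ingredient that makes the tube fibration of this lemma work.
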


\begin{proof}
  By choice of the Milnor ball, there are no critical points of $f$ on 
  $(Y^*\setminus Y_0 ) \cap B$.
  Hence we only have to check that $f/|\delta|$ is a submersion at the boundary 
  \[ 
    \{ |f| = \delta \} \cap \Sigma^*.
  \]
  This can be achieved by using first the curve selection lemma to show 
  that $f$ has no critical points on $\Sigma^* \setminus \Sigma$ on a neighborhood
  $U$ of $\Sigma$, and then the compactness of $\Sigma$: For sufficiently small 
  $\delta$ the set $\{|f| \leq \delta \} \cap \Sigma^*$ will be contained in $U$.
\end{proof}

\noindent
To create the setup to prove theorem \ref{thm:ConnectivitySecondBoundary}, 
we first choose $\alpha > 0$ such that
\begin{itemize}
  \itemsep0cm
  \item all requirements of corollary \ref{cor:FibrationSecondBoundary} are 
    fulfilled, so that we will have a fibration of the second boundary.
  \item the space 
    \[ 
      N_\alpha := \Sigma^* \cap \{ q \leq \alpha^2 \}
    \]
    has $S = V\cap \Sigma$ as a strong deformation retract in $\Sigma^*$. 
\end{itemize}
After that we choose $\delta >0$ sufficiently small with respect to $\alpha$ such that 
\begin{itemize}
  \itemsep0cm
  \item again the assumptions of corollary \ref{cor:FibrationSecondBoundary} 
    are met.
  \item lemma \ref{lem:MilnorFibrationClosedFibers} holds and we get a Milnor fibration 
    by $f$.
  \item we have $\Sigma$ as a strong deformation retract of the space
    \[ 
      \Sigma_{\leq \delta} := \Sigma^* \cap \{ |f| \leq \delta \} 
    \]
    and the retraction takes the subset 
    $\partial N_\alpha\cap \Sigma_{\leq \delta}$ into itself.
\end{itemize}
This last space now decomposes as 
\[
  \Sigma_{\leq \delta} = 
  ( \Sigma_{\leq \delta} \cap \overline{(\Sigma^* \setminus N_\alpha)} ) 
  \cup 
  (\Sigma_{\leq \delta} \cap N_\alpha ) 
  =: 
  T_1
  \cup 
  T_2 .
\]
The attentive reader may recognize $T_2$ from corollary 
\ref{cor:FibrationSecondBoundary}. The other part $T_1$ has a natural structure 
as a trivial disc bundle over $\Sigma$ as in the case of isolated singularities, 
since $\Sigma \setminus N_\alpha$ was compact
and smooth. 

Now we can according to Hamm's computations decompose the space $\Sigma^*$ as 
\begin{eqnarray*}
  \Sigma^* &\cong& (\Sigma^*\cap \{|f|\leq \delta\}) \cup 
  	 (\Sigma^* \cap \{|f| \geq \delta\} )\\
	 &\cong& (\Sigma_{\leq \delta}) \cup 
	 ( Y^* \cap B \cap f^{-1}(D_\delta) ),
\end{eqnarray*}
where the second part is a smooth fiber bundle over the circle by 
lemma \ref{lem:MilnorFibrationClosedFibers}. 

\begin{proof}{(of theorem \ref{thm:ConnectivitySecondBoundary})}
  Consider the triple of spaces 
  $(\Sigma^*, F \cup T_2, T_2)$.
  We have the following isomorphisms for the relative homology groups.
  \begin{eqnarray}
    H_q(\Sigma^*, F\cup T_2) &\cong& 
    \label{eqn:IdentificationThirdPairI}
    H_q(\Sigma^*, F \cup T_2 \cup T_1) \\
    &\cong& H_q( Y^* \cap B \cap \{ |f| = \delta \}, F \cup (\Sigma^* \cap \{ |f| = \delta\}) ) 
    \label{eqn:IdentificationThirdPairII}\\
    &\cong& H_q( F \times [0,1], \partial( F \times [0,1] ) )
    \label{eqn:IdentificationThirdPairIII} \\
    &\cong& H_{q-1}(F,\partial F) \otimes H_1(I,\partial I) = H_{q-1} ( F, \partial F )
    \label{eqn:IdentificationThirdPairIV}
  \end{eqnarray}
  for $q>0$ and $H_0(\Sigma^*,F\cup T_2) = 0$.
  The first line (\ref{eqn:IdentificationThirdPairI}) holds, because $T_1$ retracts 
  onto the part of the boundary of $F$ outside $N_\alpha$. By excision we get 
  (\ref{eqn:IdentificationThirdPairII}) and (\ref{eqn:IdentificationThirdPairIII}) 
  comes from the fibration (lemma \ref{lem:MilnorFibrationClosedFibers}). Then 
  (\ref{eqn:IdentificationThirdPairIV}) comes from the K\"unneth formula.
  \begin{eqnarray}
    H_q(\Sigma^*, T_2 ) & \cong & H_q(\Sigma^*, N_\alpha \cap \Sigma ) \\
    &\cong& H_q(\Sigma^*, S ),
    \label{eqn:IdentificationSecondPair}
  \end{eqnarray}
  because by assumption $T_2$ retracts onto $N_\alpha \cap \Sigma$ which 
  in turn retracts onto $S = V \cap \Sigma$.
  Finally by excision we deduce
  \begin{eqnarray}
    H_q(F \cup T_2, T_2) & \cong & H_q ( F, \partial_2 F ).
    \label{eqn:IdentificationFirstPair}
  \end{eqnarray}

  \noindent
  With these identifications the long exact sequence from the triple reads
  \begin{equation}
    \xymatrix{
      \cdots \ar[r] &
      H_{q+1}(\Sigma^*, F\cup T_2 ) \ar[r] \ar[d]^\cong&
      H_q(F\cup T_2, T_2 ) \ar[r] \ar[d]^\cong& 
      H_q(\Sigma^*, T_2) \ar[r] \ar[d]^\cong& 
      \cdots \\ 
      \cdots \ar[r] & 
      H_q(F, \partial F ) \ar[r] & 
      H_q(F, \partial_2 F ) \ar[r] & 
      H_q(\Sigma^*, S ) \ar[r] &
      \cdots
    }
    \label{eqn:SecondVariationSequence}
  \end{equation}
  Recall that according to Andreotti and Fraenkel's proof of the Lefschetz hyperplane 
  theorem (see e.g. \cite{Mil63}), the relative homology groups 
  $H_q (F,\partial F )$ vanish for $q < n$. Thus we find isomorphisms
  \begin{equation}
    H_q(F,\partial_2 F) \cong H_q (\Sigma^*, S) \qquad \textnormal{ for } q < n.
    \label{eqn:IdentificationRelativeHomologyGroups}
  \end{equation}
  To determine the connectivity of the pair $(F,\partial_2 F)$ we are therefore left 
  with the computation of the relative homology groups $H_q(\Sigma^*,S)$. 

  The rest of the proof will split into three cases.
  In any of these, we will show that from the long exact sequence in homology of the pair 
  $(\Sigma^*, S )$ we get 
  \begin{equation}
    H_q(\Sigma^*, S ) \cong
    \begin{cases}
      0 & \textnormal{ for } 2 < q < n \\
      H_{q-1}(S) = \ZZ & \textnormal{ for } 0 < q \leq 2
    \end{cases}
    \label{eqn:ConnectivitySigmaStarS}
  \end{equation}

  \vspace{0.5cm}
  \noindent
  \textit{Case I: } $Y^*$ is smooth. \\
  This has been done by Dirk Siersma in 
  \cite{Siersma91}. The pair $(\Sigma^*,S)$ is just $(S^{2n-1},S^1)$ with the 
  usual equatorial embedding.
  Clearly (\ref{eqn:ConnectivitySigmaStarS}) holds and 
  (\ref{eqn:RelativeHomologyGroupsMilnorFiberSecondBoundary}) follows for 
  the case $0 \leq q < n, q \neq 2$. For $q=2$ consider the following commutative 
  diagram
  \begin{equation}
    \xymatrix{
      H_2(F,\partial_2 F ) \ar[d] \ar[r]^\cong &
      H_2(F\cup T_2,T_2 ) \ar[d] \ar[r]^\cong &
      H_2(\Sigma^*, T_2 ) \ar[d] \ar[r]^\cong &
      H_2(\Sigma^*, S ) \ar[d]^\cong \\
      H_{1}(\partial_2 F ) \ar[r]^\cong &
      H_{1}(T_2) \ar[r]^\cong &
      H_{1}(T_2) \ar[r]^\cong &
      H_{1}(S) 
    }
    \label{eqn:CommutativeDiagramSecondBoundaryLowDegrees}
  \end{equation}
  All horizontal maps are isomorphisms. In the lower row they are induced 
  by the inclusion $\partial_2 F \hookrightarrow T^2$ and the retraction
  of $T_2$ onto $S$.
  The vertical map on the right clearly is an isomorphism, too.
  This finishes the proof in case I.

  \vspace{0.5cm}
  \noindent
  \textit{Case II: } $Y^*$ has an isolated singular point at the origin. \\ 
  In this case $\Sigma^*$ is a smooth compact manifold. Let $F^*$ be the Milnor 
  fiber of the isolated complete intersection singularity $(Y^*,0)$. The dimension 
  of $F^*$ is $n+1$ and according to Hamm it is homotopic to a bouquet 
  of $(n+1)$-dimensional spheres. 
  The Lefschetz hyperplane theorem asserts that one can obtain $F^*$ from 
  $\Sigma^*$ by attaching cells of dimension $\geq n+1$. Then clearly 
  $\Sigma^*$ must be $(n-1)$-connected and (\ref{eqn:ConnectivitySigmaStarS}) 
  follows from the long exact sequence of the pair $(\Sigma^*,S)$.
  The proof is finished with the same arguments as in case I.

  \vspace{0.5cm}
  \noindent
  \textit{Case III:} $Y^*$ is also singular along $V$. \\
  Here $S$ the singular part of the 
  boundary $\Sigma^*$ of $Y^*$. For $\alpha$ sufficiently small, it is homotopic to the 
  pair $(\Sigma^*, N_\alpha)$. Let again $F^*$ be the Milnor fiber in a smoothing 
  of $Y^*$ and consider the triple $(F^*,\partial F^*,\partial_2 F^*)$. 
  By excision we clearly have isomorphisms
  \[
    H_q(\Sigma^*, S ) \cong 
    H_q (\Sigma^*,N_\alpha ) \cong H_q ( \partial F^*, \partial_2 F^* ) 
  \]
  for all $q$. The long exact sequence for the triple reads 
  \begin{equation*}
    \xymatrix{
      \cdots \ar[r] & 
      H_{q+1}(F^*,\partial F^* ) \ar[r] &
      H_q(\partial F^*, \partial_2 F^* ) \ar[r] & 
      H_q(F^*,\partial_2 F^* ) \ar[r] &
      \cdots 
    }
  \end{equation*}
  and for $q+1 < n+1 = \dim F^*$, the terms $H_{q+1}(F^*,\partial F^*)$ vanish.
  Thus for all $0<q<n$ we have isomorphisms
  \begin{equation}
    H_q( F, \partial_2 F ) \cong 
    H_q( \Sigma^*, S ) \cong H_q( \partial F^*, \partial_2 F^*) \cong 
    H_q( F^*, \partial_2 F^*).
    \label{eqn:InductionStepGoingUp}
  \end{equation}
  The claim now follows by induction on the codimension of $Y^*$.
  For $0\leq q<n, q \neq 2$ the right hand term of (\ref{eqn:InductionStepGoingUp}) is 
  zero and in case $q=2$ we can extend the diagram 
  (\ref{eqn:CommutativeDiagramSecondBoundaryLowDegrees}) by one more column to obtain
  \begin{equation}
    \xymatrix{
      H_2(F,\partial_2 F ) \ar[d] \ar[r]^\cong &
      \cdots \ar[r]^\cong &
      H_2(\Sigma^*, S ) \ar[d] \ar[r]^\cong &
      H_2(F^*, \partial_2 F^*) \ar[d]^ \cong \\
      H_{1}(\partial_2 F ) \ar[r]^\cong &
      \cdots \ar[r]^\cong &
      H_{1}(S) \ar[r]^\cong&
      H_{1}(\partial_2 F^*)
    }
  \end{equation}

\end{proof}

\subsubsection{The surface case} 

We already saw in section \ref{subsec:TheSecondBoundary}, corollary 
\ref{cor:TopologySecondBoundarySurface}, that surfaces need special treatment. 
The reason for this is that the horizontal and the vertical cycles of 
the second boundary $\partial_2 F$ do not live in distinct homology groups 
anymore. In view of its applications for ICMC2 singularities in the next section, 
we will formulate a different connectivity result for the pair $(F,\partial_2 F)$ 
in the case $n=2$. 

\begin{theorem}
  Let $(Y_0,0) \subset (\CC^N,0)$ be a complete intersection line singularity 
  of dimension $n=2$.
  Recall the decomposition 
  \[
    H_1(\partial_2 F) \cong H_1' \oplus \ZZ
  \]
  into horizontal and vertical cycles 
  (\ref{eqn:SplittingH1SecondBoundarySurfaceCase}) 
  for the second boundary $\partial_2 F$ of the Milnor fiber $F$ of $Y_0$. 
  With these identifications, the natural map $\iota_1 : H_1(\partial_2 F ) \to H_1(F)$ 
  is surjective and factors via 
  \begin{equation}
    \xymatrix{
      H_1' \oplus \ZZ \ar@{->>}[rr]^{\iota_1} \ar[dr] & 
      & 
      H_1(F) \\
      & 
      H_1' \ar[ur]
      & 
      \\
    }
    \label{eqn:FactorizationInclusionSecondBoundaryMilnorfiberSurface}
  \end{equation}

  \label{thm:ConnectivitySecondBoundarySurface}
\end{theorem}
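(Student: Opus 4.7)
The plan is to adapt the proof of Theorem~\ref{thm:ConnectivitySecondBoundary} to $n=2$, where the horizontal and vertical $1$-cycles both sit in the same group $H_1(\partial_2 F)$.

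First I would establish surjectivity of $\iota_1$. The triple sequence for $(\Sigma^*, F\cup T_2, T_2)$ together with the identifications (\ref{eqn:IdentificationThirdPairIV}), (\ref{eqn:IdentificationSecondPair}) and (\ref{eqn:IdentificationFirstPair}) from the proof of Theorem~\ref{thm:ConnectivitySecondBoundary} still applies, since the required vanishing $H_q(F,\partial F)=0$ holds by Andreotti--Fraenkel for $q<n=2$. In degree $1$ it gives
\[
  0 = H_1(F,\partial F) \to H_1(F,\partial_2 F) \to H_1(\Sigma^*, S) \to H_0(F,\partial F) = 0,
\]
so $H_1(F,\partial_2 F)\cong H_1(\Sigma^*, S)$. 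Running through Cases~I--III of the proof of Theorem~\ref{thm:ConnectivitySecondBoundary} specialized to $n=2$ (Case~I is the trivial pair $(S^5,S^1)$; Case~II uses that the Milnor fibre $F^*$ of $Y^*$ is a bouquet of $3$-spheres, so $H_1(\Sigma^*)=0$ by the long exact sequence of $(F^*,\Sigma^*)$ together with Poincar\'e--Lefschetz duality; Case~III is the codimension induction) one obtains $H_1(\Sigma^*)=0$, hence $H_1(\Sigma^*,S)=0$ from the long exact sequence of $(\Sigma^*,S)$ and the fact that $S\cong S^1$ is connected. Therefore $H_1(F,\partial_2 F)=0$ and the long exact sequence of the pair $(F,\partial_2 F)$ yields the surjectivity of $\iota_1$.

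For the factorization I would extract the required kernel element from the same triple sequence in degree $2$: it furnishes a surjection $\alpha:H_2(F,\partial_2 F)\twoheadrightarrow H_2(\Sigma^*,S)$ (again using $H_1(F,\partial F)=0$) which fits into the commutative square
\[
  \xymatrix{
    H_2(F,\partial_2 F) \ar[r]^-{\partial} \ar@{->>}[d]_{\alpha} &
    H_1(\partial_2 F) \ar@{->>}[d]^{\beta} \\
    H_2(\Sigma^*,S) \ar@{->>}[r]^-{\partial'} & H_1(S).
  }
\]
Here $\partial'$ is surjective because $H_1(\Sigma^*)=0$, while $\beta$ is induced by the inclusion $\partial_2 F\hookrightarrow T_2$ followed by the deformation retraction $T_2\simeq S$. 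A direct geometric check shows that $\beta$ kills the horizontal summand $H_1'$ (its cycles live in a single transversal fibre, which retracts to a point of $S$) and sends the vertical generator (a section of $\arg L$) to the generator of $H_1(S)=\ZZ$; so $\ker\beta=H_1'$ and $\beta$ is surjective.

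Lifting a generator of $H_1(S)$ first to $H_2(\Sigma^*,S)$ via $\partial'$ and then to $z\in H_2(F,\partial_2 F)$ via $\alpha$, commutativity gives $\partial z=(h_0,1)$ in the splitting $H_1'\oplus\ZZ$ for some $h_0\in H_1'$. Since $\mathrm{image}(\partial)=\ker(\iota_1)$, the element $(h_0,1)$ generates a copy of $\ZZ$ inside $\ker\iota_1$ that projects isomorphically onto the vertical quotient $\ZZ=\ker(T_0-\mathbf{1})$. Choosing this copy as the vertical summand in the (non-canonical) splitting (\ref{eqn:SplittingH1SecondBoundarySurfaceCase}) places the vertical $\ZZ$ inside $\ker\iota_1$, which is precisely the factorization through $H_1'$. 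The main obstacle is the geometric identification of $\beta$: one must track the section of $\arg L$ on $\partial_2 F$ through the retraction $T_2\simeq S$ and verify that it maps to the generator of $H_1(S)$ while horizontal cycles collapse. Because horizontal and vertical $1$-cycles coexist in the same homology group in the surface case, the conclusion cannot be read off by degree as in Theorem~\ref{thm:ConnectivitySecondBoundary}, and it is precisely the non-trivial kernel element $(h_0,1)$ extracted from the square that controls the difference.
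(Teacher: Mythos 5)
Your proposal is correct and follows essentially the same route as the paper: surjectivity of $\iota_1$ from the vanishing of $H_1(F,\partial_2 F)\cong H_1(\Sigma^*,S)$ via the triple $(\Sigma^*,F\cup T_2,T_2)$ and the three cases for $Y^*$, and the factorization from the commutative square comparing $\partial:H_2(F,\partial_2 F)\to H_1(\partial_2 F)$ with $H_2(\Sigma^*,S)\to H_1(S)$, which is exactly the paper's diagram (\ref{eqn:CommutativeDiagramSecondBoundaryLowDegreesSurface}) with $\beta=\kappa$ composed with the retractions. Your explicit extraction of the kernel element $(h_0,1)$ and the remark that the splitting must be re-chosen accordingly is a slightly more careful spelling-out of the same argument (and your claiming only surjectivity of $\alpha$ rather than an isomorphism in degree $2$ is, if anything, more accurate).
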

  
\noindent
In other words: The vertical cycles are homologous to zero in $F$, while 
every remaining $1$-cycle of $F$ comes from a cycle in $\partial_2 F$.

\begin{proof}
  We can literally copy the setup and the beginning of the proof of 
  theorem \ref{thm:ConnectivitySecondBoundary} up to the point, where 
  we deduce the isomorphisms 
  (\ref{eqn:IdentificationRelativeHomologyGroups}).
  From here the proof of theorem \ref{thm:ConnectivitySecondBoundarySurface}
  becomes an investigation of the part 
  \[
    \xymatrix{
      H_2(F,\partial_2 F) \ar[r] &
      H_1(\partial_2 F ) \ar[r] & 
      H_1( F ) \ar[r] & 
      H_1(F,\partial_2 F) \ar[r] & 
      0
    }
  \]
  of the long exact sequence from the pair $(F,\partial_2 F)$.

  Let $l : S^1 \to \partial_2 F$ be a section of $\arg L$ representing 
  the homology class $[l]$ of the generator of the vertical cycles in 
  $H_1(\partial_2 F)$. Consider the commutative diagram
  \begin{equation}
    \xymatrix{
      H_2(F,\partial_2 F ) \ar[d] \ar[r]^\cong &
      H_2(F\cup T_2,T_2 ) \ar[d] \ar[r]^\cong &
      H_2(\Sigma^*, T_2 ) \ar[d] \ar[r]^\cong &
      H_2(\Sigma^*, S ) \ar[d]^\delta \\
      H_{1}(\partial_2 F ) \ar[r]^\kappa &
      H_{1}(T_2) \ar[r]^\cong &
      H_{1}(T_2) \ar[r]^\cong &
      H_{1}(S) 
    }
    \label{eqn:CommutativeDiagramSecondBoundaryLowDegreesSurface}
  \end{equation}
  where the column maps are the natural ones from the corresponding pairs of 
  spaces. 
  Contrary to the higher dimensions, the map $\kappa$ coming 
  from the inclusion $\partial_2 F \hookrightarrow T_2$ is not necessarily 
  an isomorphism anymore. But clearly it maps $[l]$ into the homology class 
  of the generator of $H_1(S) \cong \ZZ$. 
  
  The factorization 
  (\ref{eqn:FactorizationInclusionSecondBoundaryMilnorfiberSurface}) would 
  follow from $\delta$ on the right being surjective.
  Surjecitivity of $\iota_1$ in 
  (\ref{eqn:FactorizationInclusionSecondBoundaryMilnorfiberSurface}) 
  directly follows from $H_1(F,\partial_2 F)$ being zero.

  \vspace{0.5cm} 
  \noindent
  \textit{Case I:} $Y^*$ is smooth (cf. \cite{Siersma91}). \\
  The pair $(\Sigma^*,S)$ is nothing but a pair of spheres $(S^5,S^1)$ with the standard 
  equatorial embedding. Clearly $\delta$ in 
  (\ref{eqn:CommutativeDiagramSecondBoundaryLowDegreesSurface}) is surjective and from 
  (\ref{eqn:IdentificationRelativeHomologyGroups}) we get
  \[
    H_1(F,\partial_2 F) \cong H_1(S^5,S^1) = 0.
  \]

  \vspace{0.5cm} 
  \noindent
  \textit{Case II:} $Y^*$ has an isolated singularity at the origin. \\
  The Milnor fiber $F^*$ of $Y^*$ is a bouquet of spheres of dimension $3$ 
  and the pair $(F^*,\partial F^*)$ is $2$-connected. 
  We consider a smoothing of $(Y^*,0)$ compatible with the constructions made 
  for $(Y_0,0)$. 
  The term 
  \[
    H_1(F,\partial_2 F) \cong H_1(\Sigma^*,S) \cong H_1(\partial F^*, \partial_2 F^*)
  \]
  appears in the long exact sequence of the triple $(F^*,\partial F^*, \partial_2 F^*)$:
  \[
    \xymatrix{
      \cdots \ar[r] & 
      H_2(F^*, \partial F^*) \ar[r] & 
      H_1(\partial F^*, \partial_2 F^* ) \ar[r] & 
      H_1(F^*,\partial_2 F^*) \ar[r] & 
      \cdots
    }
  \]
  Both terms on the left and the right are zero 
  due to the connectivity of $(F^*,\partial F^*)$ and 
  the long exact sequence of the pair $(F^*,\partial_2 F^*)$.
  Consequently $H_1(F,\partial_2 F)$ also is. 
  
  The space $\Sigma^* = \partial F^*$ is $1$-connected, for if it wasn't,
  according to the 
  connectivity with its boundary, $F^*$ couldn't be a bouquet 
  of spheres. This shows surjectivity of $\delta$ in 
  (\ref{eqn:CommutativeDiagramSecondBoundaryLowDegreesSurface}).
  
  \vspace{0.5cm} 
  \noindent
  \textit{Case III:} $Y^*$ is singular along $V$. \\
  For the surjectivity of $\delta$ in 
  (\ref{eqn:CommutativeDiagramSecondBoundaryLowDegreesSurface}), we apply 
  the same argument as in the higher dimensional case. 
  From the long exact sequence of the triple 
  $(F^*,\partial F^*, \partial_2 F^*)$ and the connectivity of $(F^*,\partial F^*)$ 
  and theorem \ref{thm:ConnectivitySecondBoundarySurface}
  we deduce surjectivity of the natural map 
  \[
    H_2(\Sigma^*,S) \cong H_2(\partial F^*,\partial_2 F^*) \to 
    H_2( F^*,\partial_2 F^*) \cong H_1(\partial_2 F).
  \]
  With this we can 
  extend the diagram (\ref{eqn:CommutativeDiagramSecondBoundaryLowDegreesSurface}) 
  to the right by the column 
  \begin{equation}
    \xymatrix{
      H_2(F,\partial_2 F ) \ar[d] \ar[r]^\cong &
      \cdots \ar[r]^\cong &
      H_2(\Sigma^*, S ) \ar[d]^\delta \ar@{->>}[r] &
      H_2(F^*,\partial_2 F^*) \ar[d]^\cong\\
      H_{1}(\partial_2 F ) \ar[r]^\kappa &
      \cdots \ar[r]^\cong &
      H_{1}(S) \ar[r]^\cong &
      H_1(\partial_2 F^*) \\
    }
  \end{equation}
  Also from the connectivity of $(F^*,\partial_2 F^*)$ we get
  \[
    H_1(F,\partial_2 F ) \cong H_1(\Sigma^*,S) \cong H_1(F^*,\partial_2 F^*) = 0.
  \]
\end{proof}

\section{Application to ICMC2 singularities of type $2$}

Let $(X_0,0)\subset (\CC^{n+2},0)$ be an ICMC2 singularity of Cohen-Macaulay type 
$t=2$ described by the matrix 
\[
  A = 
  \begin{pmatrix}
    a_{1,1} & a_{1,2} \\
    a_{2,1} & a_{2,2} \\
    a_{3,1} & a_{3,2}
  \end{pmatrix}.
\]
so that the Tjurina transform 
$(Y_0,\{0\} \times \PP^1) \subset (\CC^{n+2} \times \PP^1, \{0\}\times \PP^1)$ 
is given by the equations 
\[
  \begin{pmatrix}
    f_1 \\ f_2 \\ f_3
  \end{pmatrix} 
  := 
  \begin{pmatrix}
    a_{1,1} & a_{1,2} \\
    a_{2,1} & a_{2,2} \\
    a_{3,1} & a_{3,2}
  \end{pmatrix}
  \cdot
  \begin{pmatrix}
    s_1 \\ s_2
  \end{pmatrix}
  = 0.
\]
Assume that $Y_0$ is singular along the exceptional set 
$V= \{0\} \times \PP^1$.
We may choose a Whitney stratification for $Y_0$ by strata 
\[
  (Y_0\setminus V, V\setminus \{p_1,\dots,p_N\}, \{ p_1,\dots,p_N\}).
\]
The first part of this section is devoted to creating a setup, 
in which the conditions for the methods and results of section 2 are met. 

\vspace{0.5cm}
First we construct the space $Y^*$ globally by the same arguments. Let 
$X_0 \subset U \subset \CC^{n+2}$ be a representative of $(X_0,0)$ in some open 
neighborhood $U$ of the origin and $Y_0 \subset U \times \PP^1$ its 
Tjurina transform. Consider 
\[
  \PP f : U\times \PP^1 \setminus Y_0 \to \PP^2, \quad 
  (x,s) \mapsto (f_1(x,s):f_2(x,s):f_3(x,s)).
\]
This is a well defined map although the $f_i$ are not functions. 
Choose a regular value $z\in \PP^2$ and define 
\[
  Y^* := \overline{ \PP f^{-1}(\{z\})} \subset U \times \PP^1.
\]
After a change of coordinates of $\PP^2$ sending $z$ to $(0:0:1)$, 
which naturally translates to row operations on $A$, we can assume that 
$Y^*$ is given by the equations $f_1 = f_2 = 0$ and that 
\[
  Y_0 = \{ f = 0 \} \cap Y^* \subset Y^*
\]
is the zero locus of $f:= f_3 \in H^0(U\times \PP^1,\mathcal O(1))$. 

\vspace{0.5cm}
Next we define the polar curve. Let 
$  L : Y^* \subset \CC^{n+2} \times \PP^1 \to \PP^1$
be the projection to $\PP^1$ and 
$z\in \PP^1$ a regular value of $L$ on $Y_0\setminus V$. We may, after a change 
of coordinates, which corresponds to a canonical column operation on $A$,
assume that $z=(0:1) = \infty$. In the chart $\{s_1 \neq 0\}$ we can 
play the same game as in lemma \ref{lem:ChoiceProjectionPolarCurve} in the 
whole chart at once to obtain a bending of $L$, which is sufficiently 
general for our needs. Any chosen bending in this chart 
will not alter the fiber of $L$ over 
$\infty$.

Observe that on the overlap $\{ s_1 \neq 0 \} \cap \{s_2 \neq 0\}$ the polar loci 
of the functions $f/s_1$ and $f/s_2 = f/s_1 \cdot s_1/s_2$ with respect 
to $L$ coincide. We can express $L$ as $s_2/s_1$. Then, because 
\[
  \D \frac{f}{s_1} = \D \left(\frac{f}{s_2} \cdot \frac{s_2}{s_1}\right) 
  = \frac{s_2}{s_1} \cdot \D \frac{f}{s_2} + \frac{f}{s_2} \cdot \D \frac{s_2}{s_1}
\]
clearly
\begin{eqnarray*}
  \Gamma &=&  \overline{\left\{ x \in Y^*\setminus Y_0 : 
    \D \frac{f}{s_2}(x) \textnormal{ and } \D \frac{s_2}{s_1} (x) 
    \textnormal{ are linearly dependent in } \Omega^1_{Y^*} \right\} } \\ 
  &=& \overline{ \left\{ x \in Y^* \setminus Y_0 : 
    \D \frac{f}{s_1}(x) \textnormal{ and } \D \frac{s_2}{s_1} (x) 
    \textnormal{ are linearly dependend in } \Omega^1_{Y^*} \right\} }. \\ 
\end{eqnarray*}
After possibly repeating the bending process of $L$ on the other chart, we have 
a well defined global polar curve $\Gamma \subset Y^*$, which is smooth 
outside $Y_0$ and meets $Y_0$ only at finitely many points along $V$. 
We add those points to the zero-dimensional stratum of the Whitney stratification of 
$Y_0$.

\subsection{The generic rank $1$ perturbation and the axis}
\label{subsec:GenericRankOnePerturbationAndAxisPoint}

Because $f = a_{3,1} \cdot s_1 + a_{3,2} \cdot s_2$ is a section of $\mathcal O(1)$ 
and not a function on $Y^*$, we can not globally perturb by a constant, but we 
have to choose another section 
$b = b_0 \cdot s_1 + b_1 \cdot s_2 \in H^0(\CC^{n+2}\times \PP^1, \mathcal O(1))$ and 
consider 
\[
  f - \delta \cdot b = 0
\]
in $\CC^{n+2} \times \PP^1 \times \CC$.
Thus there will always be one point in $V$, the zero locus of $b$, 
at which we will perturb the local 
equation of $f$ by zero. This point is called the \textit{axis point} of the 
deformation. It is unavoidable, but we can choose its position by the parameters 
$(b_1:b_2)$. 

Let us assume that after a change of coordinates the point 
$(0,\infty):= (0,(0:1)) \in V$ is not in the 
stratum $\{p_1,\dots,p_N\}$ of $Y_0$ and consider the deformation, which has 
$(0,\infty)$ as the axis point.
For the original ICMC2 singularity $(X_0,0)$ this means we consider the deformation
given by the perturbation
\begin{equation}
  \begin{pmatrix}
    a_{1,1} & a_{1,2} \\
    a_{2,1} & a_{2,2} \\
    a_{3,1} & a_{3,2}
  \end{pmatrix} - 
  \delta \cdot 
  \begin{pmatrix}
    0 & 0 \\
    0 & 0 \\
    1 & 0 \\
  \end{pmatrix}
  \label{eqn:GenericRank1Perturbation}
\end{equation}
This gives the equations for the total space $Y \subset \CC^{n+2} \times \PP^1 \times \CC$ 
of the deformation of $Y_0$ in the obvious way.

Note, that due to the generality assumptions in the choices of $Y^*$ and the axis 
point this is a \textit{generic rank 1 perturbation}. Every perturbation of $A$ by 
a constant matrix $B$ of rank $1$ can be brought to this form using row- and column 
operations on $A-\delta\cdot B$.

In the chart $\{s_1 \neq 0\}$ we now have a deformation of $Y_0$ given by the 
perturbation of 
\begin{equation}
  \frac{f}{s_1} : Y^* \setminus \{s_1 \neq 0\} \to \CC
  \label{eqn:GenericRank1PerturbationOffAxisPoint}
\end{equation}
by $\delta$. 
On the other hand at the axis point $(0,\infty)$ we find 
\begin{equation}
  \frac{f}{s_2} : (Y^*,(0,\infty)) \to (\CC,0)
  \label{eqn:GenericRank1PerturbationAtAxisPoint}
\end{equation}
perturbed by $\delta \cdot s$, where $s = s_1/s_2$ is the local coordinate 
of $\PP^1$ at $\infty$.

\subsection{$Y_\delta$ at the axis point} 

By assumption the axis point $(0,\infty)$ of the generic rank $1$ deformation was 
in general position along $V$. This means if we let 
$g = \frac{f}{s_2}$ be the local equation (\ref{eqn:GenericRank1PerturbationAtAxisPoint}) 
for $Y_0$ in $Y^*$ at the axis point, 
we find ourselves in the setup of 
theorem \ref{thm:MilnorFiberProductCase}.

Let $s = s_1/s_2$ and $x_1,\dots,x_{n+2}$ be local coordinates in this chart such that 
the point $(0,\infty)$ is the origin and choose
$\alpha, \beta>0$ as in theorem \ref{thm:MilnorFiberProductCase}.
Then for $\delta$ small enough the map 
\[
  G:=(g,s) : Y^* \cap \Delta_{\alpha \beta} \cap g^{-1}(D_\delta) \to 
  D_\delta \times D_\beta
\]
is a fiber bundle away from $Y_0 = G^{-1}(\{0\} \times D_\beta)$. 

The Milnor fiber of $g$ at $0$ is the preimage of a line $\{\delta\} \times D_\beta$ 
under this map. It inherits its product structure from the fibration by $s$.
To obtain the deformed fiber $Y_\delta$ of the generic rank $1$ deformation of $Y_0$ at 
$(0,\infty)$ from , we have to take a bent line 
\[
  W = \{ (\delta \cdot y, y) : y \in D_\beta \} \subset D_\delta \times D_\beta.
\]
Now $Y_\delta \cap \Delta_{\alpha \beta} = G^{-1}(W)$.
We deduce the following lemma:

\begin{lemma}
  Let $g : Y^* \to \CC$ be the local equation for $Y_0$ at the axis point 
  $(0,\infty) \in V$, 
  $s$ a local coordinate for $V$ at $(0,\infty)$ with $s(0,\infty) = 0$ and 
  $\Delta_{\alpha \beta}$ a chosen polydisc in the sense of 
  theorem \ref{thm:MilnorFiberProductCase}.
  Then for $\delta>0$ sufficiently small with respect to $\alpha$ and $\beta$, 
  the space
  \[
    Y_\delta \cap \Delta_{\alpha \beta} := Y^* \cap \Delta_{\alpha \beta} \cap 
    \{g=s\cdot \delta\}
  \]
  is the fiber over $\delta$ of the generic rank $1$ perturbation 
  (\ref{eqn:GenericRank1PerturbationAtAxisPoint}) close to the axis point.
  The map 
  \[ 
    L : Y_\delta \cap \Delta_{\alpha \beta} \to \PP^1
  \]
  is a fibration over a punctured neighborhood $D^\times_\beta$ of $\infty \in \PP^1$.
  The central fiber 
  \[
    Y^\pitchfork_\infty := Y_\delta \cap \Delta_{\alpha \beta} \cap \{L = \infty\} = 
    Y_0 \cap \Delta_{\alpha \beta} \cap \{L = \infty\},
  \]
  however, does not change as we pass from $Y_0$ to $Y_\delta$.
  Consequently $Y_\delta$ may retain a singular point at $(0,\infty)$. If this 
  happens, it is at most an ICIS.
  \label{lem:YdeltaAtAxisPoint}
\end{lemma}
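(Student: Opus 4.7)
My plan is to identify $Y_\delta \cap \Delta_{\alpha\beta}$ with the preimage of a graph under the map $G = (g, s)$ furnished by Theorem~\ref{thm:MilnorFiberProductCase}, and to read off the three asserted properties from the fiber-bundle structure of $G$ away from $\{g = 0\}$. Since the axis point $(0, \infty)$ is by assumption a generic point of $V$, the product-case hypotheses of Section~\ref{subsec:MilnorFiberProductCase} hold there; in particular $G$ is a proper submersion off $Y_0 = \{g = 0\}$ once $\delta>0$ is small enough. The local equation of the generic rank~$1$ perturbation being $g = \delta \cdot s$, we obtain the identification $Y_\delta \cap \Delta_{\alpha\beta} = G^{-1}(W)$ with $W = \{(\delta y, y) : y \in D_\beta\} \subset D_\delta \times D_\beta$.

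From this identification the fibration and the invariance of the central fiber are essentially formal. The graph $W$ meets $\{g = 0\}$ only at the origin, which corresponds to $s = 0$, i.e.\ to $L = \infty$; over the complement $W \setminus \{0\}$, $G$ is a proper submersion, and $(\delta y, y) \mapsto y$ identifies $W \setminus \{0\}$ diffeomorphically with $D_\beta^\times$. Hence
\[
  L : (Y_\delta \cap \Delta_{\alpha\beta}) \setminus Y_\infty^\pitchfork \to D_\beta^\times
\]
is a proper submersion, and a smooth fiber bundle by Ehresmann. For the central fiber, substituting $s = 0$ into $g = \delta s$ gives $g = 0$, so $Y_\delta \cap \{L = \infty\} = Y^* \cap \{g = 0\} \cap \{s = 0\} = Y_0 \cap \{L = \infty\} = Y_\infty^\pitchfork$, independently of $\delta$.

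The main obstacle is the ICIS claim. Since $Y_\delta$ is cut out in $\CC^{n+2} \times \PP^1$ by the two equations defining $Y^*$ together with $g - \delta \cdot s$, it is a complete intersection of the expected codimension, and the entire content of the statement reduces to showing that its singular locus inside $\Delta_{\alpha\beta}$ is contained in $\{(0,\infty)\}$. For $p \in Y_\delta \cap \Delta_{\alpha\beta}$ with $s(p) \neq 0$, one has $g(p) = \delta s(p) \neq 0$, so $p$ lies in the submersive locus of $G$; in particular $Y^*$ is smooth at $p$ and $dg, ds$ are linearly independent in $T_p^* Y^*$, so $d(g - \delta s)$ is nonzero on $T_p Y^*$. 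The delicate case is a point $p \in Y_\infty^\pitchfork \setminus \{(0,\infty)\}$. Such a $p$ lies in the open stratum $Y_0 \setminus V$, where $Y^*$ is smooth and $Y_0 \subset Y^*$ is a smooth hypersurface with $\ker dg|_{T_p Y^*} = T_p Y_0$. In the product decomposition $Y_0 \cong Y_0^\pitchfork \times V$ the restriction $L|_{Y_0}$ is the projection onto the $V$ factor, so $ds|_{T_p Y_0} \neq 0$. Consequently $dg$ and $ds$ remain linearly independent in $T_p^* Y^*$, and $d(g - \delta s)$ is again nonzero. This linear-algebra verification across the boundary $Y_0 \setminus V$ is the only subtle step; once it is in place, the only candidate singular point of $Y_\delta$ in $\Delta_{\alpha\beta}$ is $(0,\infty)$, and an isolated singularity on a complete intersection of the correct codimension is by definition an ICIS.
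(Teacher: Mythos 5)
Your proposal is correct and follows essentially the same route as the paper: the lemma is deduced there from the identification $Y_\delta\cap\Delta_{\alpha\beta}=G^{-1}(W)$ with $G=(g,s)$ and $W$ the bent line $\{(\delta y,y)\}$, exactly as you set it up. The only difference is that you spell out the smoothness of $Y_\delta$ along $Y^\pitchfork_\infty\setminus\{(0,\infty)\}$ via the linear independence of $\D g$ and $\D s$, a verification the paper leaves implicit.
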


\begin{definition}
  The space $Y_\infty^\pitchfork$ is called the axis of the deformation.
\end{definition}

\begin{corollary}
  The space $Y_\delta \cap \Delta_{\alpha \beta}$ as in lemma 
  \ref{lem:YdeltaAtAxisPoint} is contractible. 
  \label{cor:YdeltaContractible}
\end{corollary}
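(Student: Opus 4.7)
The plan is to deform $Y_\delta \cap \Delta_{\alpha\beta}$ onto the axis $Y_\infty^\pitchfork$ and to observe that the axis is itself contractible.

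First, $Y_\infty^\pitchfork = Y_0 \cap \Delta_{\alpha\beta} \cap \{L = \infty\}$ is a Milnor representative of the transversal singularity germ $(Y_0^\pitchfork, 0)$: the axis point $(0,\infty)$ was chosen in general position along $V$, so the slice $\Delta_{\alpha\beta} \cap \{s = 0\}$ is a disc of radius $\alpha$ in $\CC^{n+2}$ which serves as a Milnor ball for this isolated complete intersection singularity. Hence $Y_\infty^\pitchfork$ is a cone over its link and in particular contractible.

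For the deformation retract, I would lift the radial retraction $\rho_t(y) = (1-t)y$ on $D_\beta$ to a retraction on $Y_\delta \cap \Delta_{\alpha\beta}$ using the fiber bundle structure from theorem \ref{thm:MilnorFiberProductCase}. The bent line $W = \{(\delta y, y) : y \in D_\beta\}$ meets $\{0\}\times D_\beta$ transversely at the single point $(0,0)$, so the restriction
\[
  L : (Y_\delta \cap \Delta_{\alpha\beta}) \setminus Y_\infty^\pitchfork \longrightarrow D_\beta \setminus \{0\}
\]
is a smooth fiber bundle with fiber the transversal Milnor fiber $F^\pitchfork$. Choosing a horizontal distribution for this bundle lifts the radial vector field $-y\partial_y$ on $D_\beta \setminus \{0\}$ to a vector field $\xi$ on the complement of the central fiber, whose flow sits over $y \mapsto (1-t)y$. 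Extending $\xi$ by zero on $Y_\infty^\pitchfork$, the integral curves reach the axis in the limit $t \to 1$.

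The main obstacle is verifying continuity of this flow at the axis $Y_\infty^\pitchfork$. Integral curves starting at $q$ with $L(q) = y$ remain inside the tube $\{|L| \leq |y|\}$ and hence enter any prescribed neighborhood of $Y_\infty^\pitchfork$ as $|y|$ becomes small. To guarantee that they converge to well-defined limit points depending continuously on $q$, one uses lemma \ref{lem:YdeltaAtAxisPoint}: $(Y_\delta, (0,\infty))$ is at most an ICIS germ, so locally $Y_\delta$ has a conical structure at the axis point, and the horizontal distribution can be adapted to this cone so that the flow lines of $\xi$ extend continuously to limits on $Y_\infty^\pitchfork$. Combined with the contractibility of the axis, this shows that $Y_\delta \cap \Delta_{\alpha\beta}$ deformation retracts onto $Y_\infty^\pitchfork$, and is therefore contractible.
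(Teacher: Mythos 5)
Your argument is correct and follows essentially the same route as the paper: retract $Y_\delta \cap \Delta_{\alpha\beta}$ onto the central fiber $Y_\infty^\pitchfork$ using the fibration structure of $L$ away from it, and then use that $Y_\infty^\pitchfork$ is a contractible Milnor representative of an ICIS. The only cosmetic difference is how the continuity of the retraction at the axis is handled -- you adapt a horizontal lift of the radial vector field to the local cone structure, whereas the paper invokes the fact that $Y_\infty^\pitchfork$ is a euclidean neighborhood retract of a neighborhood $U$ and composes the two retractions.
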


\begin{proof}
  The central fiber $Y^\pitchfork_\infty$ is a euclidean neighborhood retract of 
  some open neighborhood $U$ in $Y_\delta \cap \Delta_{\alpha \beta}$. Clearly 
  the fiber bundle $(Y_\delta \cap \Delta_{\alpha \beta}) \setminus Y^\pitchfork_\infty$ 
  can be retracted onto $U$ and successively onto $Y^\pitchfork_\infty$. Being 
  the representative of a germ of an isolated singularity in a Milnor ball, 
  $Y^\pitchfork_\infty$ is contractible. 
  Concatenation of these two contractions establishes the claim.
\end{proof}

\subsection{The global picture in the generic rank $1$ perturbation} 
\label{subsec:TheGlobalPicture}

After we already have a description of what happens at the axis point $(0,\infty)$ 
in a generic rank $1$ peturbation, 
let us now compute the topology of $Y_\delta$ in the other chart. 
To create a global setup first choose Milnor balls $B_i$ of radius $\varepsilon$ 
for all special points 
$\{p_1,\dots,p_N\}$ of $Y_0$. 
Let $B'_i$ be a Milnor ball of radius $\varepsilon/2$ around $p_i$ and set 
\[
  B = \bigcup_{i=1}^N B_i, \qquad B' = \bigcup_{i=1}^{N} B'_i.
\]
We now choose $\alpha>0$ sufficiently small such that 
\begin{itemize}
  \itemsep0cm
  \item all the local theory (theorem \ref{thm:ConnectivitySecondBoundary}
    for threefolds or theorem \ref{thm:ConnectivitySecondBoundarySurface} in the 
    surface case) 
    works at the special points $p_i$, 
  \item theorem \ref{thm:MilnorFiberProductCase} holds along the set 
    \[
      V' := V \setminus (B' \cup \{(0,\infty)\}),
    \]
    i.e. for $\delta>0$ small enough the map 
    \begin{equation}
      L : Y_\delta \cap \{q \leq \alpha^2\} \cap L^{-1}(V') \to V'
      \label{eqn:LInducingFiberBundleOverVprime}
    \end{equation}
    is a fiber bundle over $V'$ with fiber $F^\pitchfork$.
\end{itemize}
Note that for the last requirement we can use lemma 
\ref{lem:YdeltaAtAxisPoint} to achieve this behaviour in a neighborhood of the axis 
point. After that we're left with a compact subset of $V$, along which the existence 
of a global minimal $\alpha >0$ can certainly be assured.

Now we choose $\delta>0$ small enough with respect to all prior choices 
such that all the local theory developed above works at once along all points of the 
compact set $V$. 

We can now piece together the topology of $Y_\delta$ from the topology of 
the several known patches. 
We regard the axis point $(0,\infty)$ as a further special point $p_0$
in the Whitney stratification of $Y_0$. 
Let $\Delta$ be the chosen polydisk around $p_0$ and set 
\begin{equation}
  U:= Y_\delta \cap (B \cup \Delta), 
  \qquad W:= Y_\delta \cap \{q \leq \alpha^2 \} \cap L^{-1}( V' ).
  \label{eqn:ChoiceUandW}
\end{equation}
Furthermore let 
$\partial_2 F_i$ be the second boundary of the local Milnor fiber 
of $(Y_0,p_i)$ at $p_i$ for 
$i>0$. In case $i=0$, i.e. at the axis point, we just set 
\[
  \partial_2 F_0:= 
  Y_\delta \cap \{q \leq \alpha^2 \} \cap L^{-1}(\partial D_\beta),
\]
where $D_\beta$ was the chosen disc around $\infty \in \PP^1$.
We easily verify that the inclusion 
\[
  \partial_2 F_i \hookrightarrow (U\cap W)_i
\]
induces a homology equivalence 
\begin{equation}
  H_q(U\cap W) \cong \bigoplus_{i=0}^N H_q( (U \cap W)_i ) \cong
  \bigoplus_{i=0}^N H_q(\partial_2 F_i) 
  \label{eqn:HomologyEquivalenceIntersectionSecondBoundaries}
\end{equation}
where $(U \cap W)_i$ is the component of $U\cap W$ close to $p_i$.
For $q=1$ and $i>0$ let $[l_i]$ be the generator of $H_1(\partial_2 F_i)$ 
-- respectively the 
generator of the vertical part in case $n=2$ --
represented by a section 
$l_i : S^1 \to \partial_2 F_i$ of $\arg L$ in 
(\ref{eqn:FibrationSecondBoundary}), cf. corollary
\ref{cor:TopologySecondBoundary} and \ref{cor:TopologySecondBoundarySurface}.

The homology groups of $W$ itself are determined by its structure as a fiber bundle 
over $V'$ (\ref{eqn:LInducingFiberBundleOverVprime}).
Because $V'$ has the homotopy type of a finite bouquet of circles around the 
points $p_1,\dots,p_N$, 
we can basically repeat the arguments leading to corollary 
\ref{cor:TopologySecondBoundary} and \ref{cor:TopologySecondBoundarySurface}.
In particular we can assume
\begin{equation}
  H_0(W) = 0, \quad
  H_1(W)= 
  \begin{cases}
    \ZZ^N & \textnormal{ if } n = 3 \\
    H_1' \oplus \ZZ^N & \textnormal{ if } n = 2
  \end{cases}
  \label{eqn:HomologyGroupsWLowDegrees}
\end{equation}
where $H_1'$ is the quotient of $H_1(Y_\delta^\pitchfork)$ by the monodromies 
around all loops in the base $V'$ 
and in both cases $\ZZ^N$ is generated by the $[l_i]$. 
We can view the latter as sections of the generators of $H_1(V')$.

\begin{theorem}
  Let $Y_\delta$ be the fiber over $\delta \neq 0$ in 
  the genereric rank $1$ perturbation of 
  the Tjurina transform $(Y_0,V) \subset (\CC^{n+2}\times \PP^1, \{0\} \times \PP^1)$ 
  of an ICMC2 threefold singularity 
  $(X_0, 0) \subset (\CC^{n+2},0)$ of dimension $n=2$ or $3$ and
  Cohen-Macaulay type $t=2$. 
  Let 
  $ L : Y_\delta \to \PP^1 $ 
  be the projection to $\PP^1$
  and $G \subset H^\bullet(Y_\delta)$ the image of 
  $L^* : H^\bullet(\PP^1) \to H^\bullet(Y_\delta)$.
  Then $Y_\delta$ is simply connected
  and the homology of $Y_\delta$ splits into 
  \[
    H_\bullet(Y_\delta) \cong G^\perp \oplus \ZZ,
  \]
  where 
  $G^\perp = \{ [\sigma] \in H_\bullet(Y_\delta) : 
  g \cap [\sigma] = 0 \quad \forall g \in G \}$
  are the horizontal cycles of $Y_\delta$.
  The cap product with $L^*(H^2(\PP^1))$ gives a perfect 
  pairing of the vertical cycles $H_\bullet(Y_\delta)/G^\perp = \ZZ$
  with $H^2(\PP^1)$.
  If $n=3$, then $H_2(X_\varepsilon) \cong \ZZ$ consists of the 
  vertical cycles only.
  \label{thm:SecondHomologyGroupRank1Perturbation}
\end{theorem}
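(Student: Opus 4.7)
The plan is to run a Mayer--Vietoris computation for the decomposition $Y_\delta = U \cup W$ from (\ref{eqn:ChoiceUandW}) and to exhibit the vertical generator as the fundamental class of a continuous section of $L$. By (\ref{eqn:HomologyEquivalenceIntersectionSecondBoundaries}) the intersection $U\cap W$ is homotopy equivalent to the disjoint union of the second boundaries $\partial_2 F_i$, $i=0,\dots,N$, whose homology is computed in corollaries \ref{cor:TopologySecondBoundary} and \ref{cor:TopologySecondBoundarySurface}. The piece $W$ is a fibre bundle over $V'\simeq \bigvee_{i=1}^{N} S^1$ with fibre $Y_\delta^\pitchfork$ and low-degree homology (\ref{eqn:HomologyGroupsWLowDegrees}), and the components of $U$ are the local Milnor fibres $F_i$ of the line singularities at $p_i$ for $i\geq 1$ together with a contractible neighbourhood of the axis point by corollary \ref{cor:YdeltaContractible}.

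The first step is to build a section $l'\colon \PP^1 \to Y_\delta$ of $L$. Over $V'$, because the base is a $1$-dimensional CW complex and the fibre $Y_\delta^\pitchfork$ is $(n-2)$-connected by Hamm's theorem, obstruction theory produces a unique homotopy class of section $l\colon V' \to W$ whose restriction to the small oriented circle around each $p_i$ represents the vertical generator $[l_i]\in H_1(\partial_2 F_i)$. Theorems \ref{thm:ConnectivitySecondBoundary} and \ref{thm:ConnectivitySecondBoundarySurface} state that each $[l_i]$ is killed by $H_1(\partial_2 F_i) \to H_1(F_i)$, so $l$ extends across every special point $p_i$, $i \geq 1$. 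At the axis point the whole fibre $Y_\infty^\pitchfork$ is contractible in $Y_\delta$ by corollary \ref{cor:YdeltaContractible}, so $l$ extends across $\infty$ as well. The resulting section $l'$ satisfies $L\circ l' = \operatorname{Id}_{\PP^1}$, hence $L^\ast[\PP^1]^\vee \cap [l'] = [\operatorname{pt}]\neq 0$, giving both the $\ZZ\cdot[l']$ summand and the claimed perfect pairing.

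The second step is the Mayer--Vietoris diagram chase. Simple connectedness of $Y_\delta$ follows from van Kampen applied to $U\cup W$: the generators of $\pi_1(W)$ are represented by the loops $[l_i]$, and these all bound $2$-discs in the corresponding $F_i$ by the extension argument above, while the contractible axis component contributes nothing. For the splitting $H_\bullet(Y_\delta)\cong G^\perp \oplus \ZZ$, any class supported in a single fibre $Y_\delta^\pitchfork$ or pulled back from a cycle on $V'$ caps trivially with $L^\ast H^\bullet(\PP^1)$ and hence lies in $G^\perp$; by uniqueness of the section up to homotopy, the only class pairing nontrivially with $L^\ast[\PP^1]^\vee$ is $[l']$. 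For the final assertion in the case $n=3$, that $H_2(Y_\delta) = \ZZ\cdot[l']$ contains no horizontal contribution, one checks from corollary \ref{cor:TopologySecondBoundary} that every class in $H_2(\partial_2 F_i) \subseteq H_2(Y_\delta^\pitchfork)$ bounds a Lefschetz-type thimble in $F_i$; the Mayer--Vietoris sequence then forces $H_2(W)\to H_2(Y_\delta)$ to collapse onto the single vertical class.

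The main obstacle will be the bookkeeping in the surface case $n=2$, where horizontal and vertical $1$-cycles of $\partial_2 F_i$ live in the same group (\ref{eqn:SplittingH1SecondBoundarySurfaceCase}). Here the refined factorisation (\ref{eqn:FactorizationInclusionSecondBoundaryMilnorfiberSurface}) of theorem \ref{thm:ConnectivitySecondBoundarySurface} is essential: it guarantees that $\iota_1\colon H_1(\partial_2 F_i) \to H_1(F_i)$ kills exactly the vertical summand $\ZZ\cdot[l_i]$ while sending the horizontal summand $H_1'$ isomorphically into $H_1(F_i)$. This is what lets the Mayer--Vietoris computation cleanly separate the horizontal $1$-cycles of the fibre (which survive into $G^\perp$) from the single new vertical $2$-cycle produced by $l'$, following the pattern of the worked example preceding the theorem and of the projective hypersurface arguments of \cite{ST15}.
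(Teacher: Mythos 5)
Your overall strategy (Mayer--Vietoris for $Y_\delta = U\cup W$, surjectivity of $\iota_1$ via the vertical generators $[l_i]$, and a section of $L$ producing the vertical $2$-class) matches the paper's proof. But there is a genuine gap in the step that makes the theorem nontrivial for $n=3$: showing that $\kappa_2\colon H_2(U)\oplus H_2(W)\to H_2(Y_\delta)$ vanishes, i.e.\ that no \emph{horizontal} classes survive into $H_2(Y_\delta)$. You argue that ``every class in $H_2(\partial_2 F_i)$ bounds a Lefschetz-type thimble in $F_i$,'' but neither corollary \ref{cor:TopologySecondBoundary} nor theorem \ref{thm:ConnectivitySecondBoundary} says this, and it is false in general: the paper's discussion of De Jong's $F_1A_3$ singularity is included precisely to warn that transversal cycles of a complete intersection line singularity need \emph{not} be homologous to zero in the local Milnor fibre $F_i$ (the theorems only control $H_q(F,\partial_2 F)$ for $q<n$, so for $n=3$ they say nothing about killing $2$-cycles inside $F_i$). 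The correct mechanism is global, not local: corollary \ref{cor:TopologySecondBoundary} lets you represent any class of $H_2(U)$ or $H_2(W)$ by a cycle in some transversal Milnor fibre $F^\pitchfork$; the fibre-bundle structure (\ref{eqn:LInducingFiberBundleOverVprime}) of $W$ over $V'$ then transports that cycle into a transversal fibre arbitrarily close to the axis fibre $Y^\pitchfork_\infty$, where it collapses because $Y_\delta\cap\Delta$ is contractible (corollary \ref{cor:YdeltaContractible}). You invoke the axis only to extend the section over $\infty$, not for this killing argument, so as written your proof does not establish $H_2(Y_\delta)\cong\ZZ$ or the last sentence of the theorem.

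Two smaller points. First, the vanishing of $[l_i]$ in $H_1(F_i)$ lets you cap the boundary circles of the relative cycle $[l]\in H_2(Y_\delta,L^{-1}(D))$ by $2$-chains in $F_i$, but those caps are not sections of $L$, so you do not actually obtain $l'$ with $L\circ l'=\mathrm{Id}_{\PP^1}$; the paper instead lifts $[l]$ through the long exact sequence of the pair and computes $L_*$ on the resulting absolute class via the diagram (\ref{eqn:SittingOverRelativeSequences}), which is what you should do. Second, in the surface case theorem \ref{thm:ConnectivitySecondBoundarySurface} gives that $H_1'\to H_1(F_i)$ is \emph{surjective} and that the vertical summand dies; it does not assert an isomorphism onto $H_1(F_i)$, though surjectivity is all the Mayer--Vietoris argument needs.
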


\begin{proof}
  Consider the Mayer-Vietoris sequence for $Y_\delta$ for the choice 
  (\ref{eqn:ChoiceUandW}) of the two patches 
  $U$ and $W$. First of all, the tail gives a short exact sequence
  \[
    \xymatrix{ 
      0 \ar[r] &
      H_0(U \cap W ) \ar[r] & 
      H_0(U) \oplus H_0(W) \ar[r] & 
      H_0(Y_\delta) \ar[r] & 
      0
    }
  \]
  and $Y_\delta$ is clearly connected. The first homology 
  group $H_1(Y_\delta)$ appears in the exact sequence 
  \begin{equation}
    \xymatrix{
      H_1(U \cap W ) \ar[r]^{\iota_1} &
      H_1(U) \oplus H_1(W) \ar[r] &
      H_1(Y_\delta) \ar[r] &
      0
    }.
    \label{eqn:MayerVietorisYDeltaH1}
  \end{equation}

  We proceed with the proof for the case $n=3$.
  From theorem \ref{thm:ConnectivitySecondBoundary} we know that $H_1(U) = 0$.
  On the generators chosen above, the map $\iota_1$ to the second 
  summand is given by the matrix
  \begin{equation}
    \begin{pmatrix}
      1 & 1 & 0 & \cdots & 0 \\
      1 & 0 & 1 & \ddots & \vdots \\
      \vdots & \vdots & \ddots & \ddots & 0 \\
      1 & 0 & \cdots & 0 & 1
    \end{pmatrix}
    \label{eqn:RepresentingMatrixIota1}
  \end{equation}
  and therefore clearly surjective. Thus $H_1(Y_\delta) = 0$ and 
  $Y_\delta$ is simply connected.

  Proceeding along the Mayer-Vietoris sequence to the left, we see in  
  (\ref{eqn:MayerVietorisYDeltaH2}) 
  that $H_2(Y_\delta)$ must be nonzero, because clearly the kernel of 
  (\ref{eqn:RepresentingMatrixIota1}) is free of rank $1$.
  \begin{equation}
    \xymatrix{
      H_2(U) \oplus H_2(W) \ar[r]^{\kappa_2} &
      H_2(Y_\delta) \ar[r]^{\partial_2} &
      H_1(U \cap W ) \ar[r]^{\iota_1} &
      H_1(U) \oplus H_1(W)
    }
    \label{eqn:MayerVietorisYDeltaH2}
  \end{equation}
  But $\kappa_2$ is in fact the zero map. To see this observe the following. 
  Every homology class $[\sigma] \in H_2(U)$ can be represented as a sum 
  of $2$-cycles in the boundaries 
  \[
    \sigma = \sum_{i=1}^N \sigma_i, \quad [\sigma_i] \in H_2(\partial_2 F_i)
  \]
  as a consequence of theorem \ref{thm:ConnectivitySecondBoundary}. 
  Corollary \ref{cor:TopologySecondBoundary} then tells us that $[\sigma_i]$
  even comes from a cycle in a transversal Milnor fiber 
  $[\sigma_i] \in H_2(F^\pitchfork_i)$ 
  close to $p_i$.
  The same holds for any $[\sigma] \in H_2(W)$ and any other chosen transversal 
  Milnor fiber over a point in $V'$. 

  Mapping any $[\sigma] \in H_2(U) \oplus H_3(W)$ into $H_2(Y_\delta)$ makes 
  it therefore homologous to a cycle in a transversal Milnor fiber 
  arbitrary close to $Y^\pitchfork_\infty$, 
  the fiber of $L$ over the axis point. Here it collapses, because 
  $Y_\delta \cap \Delta$ was contractible by corollary 
  \ref{cor:YdeltaContractible}. 

  Consequently $H_2(Y_\delta) = \ker \iota_1$. 
  We construct a generator for $H_2(Y_\delta)$ as follows. 
  Over $V' \cup \{\infty\}$ there exists a continous section 
  \[
    l : V' \to Y_\delta \cap L^{-1}(V')
  \]
  of $L$, because $L$ gives $Y_\delta \cap L^{-1}(V')$ the structure of a fiber bundle 
  with $1$-connected fiber $Y_\delta^\pitchfork$ over a base, which is homotopic to 
  a bouquet of $1$-spheres. 
  We can extend $l$ over $\infty$, because we only glue in a contractible fiber.
  Let $D = \overline{\PP^1 \setminus (V'\cup \{\infty\})}$ be the closure of the 
  complement of the domain of definition of $l$. 
  Then the fundamental class of the image of $l$ defines a unique relative cycle 
  \[
    [l] \in H_2(Y_\delta, L^{-1}(D)).
  \]
  Consider the following commutative diagram 
  \begin{equation}
    \xymatrix{
      H_2(L^{-1}(D)) \ar[r] &
      H_2(Y_\delta ) \ar[r]^\pi \ar[d]^{L_*} &
      H_2(Y_\delta, L^{-1}(D)) \ar[d]^{L_*} \ar[r] & 
      H_1(L^{-1}(D)) \\
      &
      H_2(\PP^1) \ar[r]^\cong & 
      H_2(\PP^1, D ) &
      \\
    }
    \label{eqn:SittingOverRelativeSequences}
  \end{equation}
  The image of $[l]$ in $H_1(L^{-1}(D))$ is zero by theorem 
  \ref{thm:ConnectivitySecondBoundary}: At each special point $p_i$ the component 
  of the boundary of $[l]$ in the local Milnor fiber is homologous to the 
  generator of $H_1(\partial_2 F) \cong H_2(F,\partial_2 F)$.
  On the other hand the map on the left into $H_2(Y_\delta)$ is the zero map 
  by the previous arguments: All $2$-cycles of the local Milnor fibers become 
  homologous to zero in $Y_\delta$. A generator $[\sigma]$ of $H_2(Y_\delta)$ is therefore 
  given as a preimage of $[l]$ under $\pi$. 

  The map $L_*$ on the right is an isomorphism and hence on the left $L_*$ maps 
  $[\sigma]$ to the fundamental class of $\PP^1$. 
  This finishes the proof for the threefolds.

  \vspace{0.5cm}
  If $n=2$, we need to modify the arguments above. First we show surjectivity 
  of $\iota_1$ in (\ref{eqn:MayerVietorisYDeltaH1}). 
  Recall that we 
  can split
  \[
    H_1(U\cap W) \cong \bigoplus_{i=0}^N H_1(\partial_2 F_i) \cong 
    \bigoplus_{i=0}^N \left(H_1'(\partial_2 F_i)\oplus \ZZ \right)
  \]
  into its horizontal and vertical part,
  where $H_1'(\partial_2 F_i)$ is the cokernel of $H_1(F^\pitchfork)$ by the 
  vertical monodromy at $p_i$.

  We can restrict the first component of $\iota_1$ mapping into 
  $H_1(U) = \bigoplus_{i=1}^N H'_1(F_i)$ to the summand 
  $\bigoplus_{i=1}^N H_1(\partial_2 F_i)$ and the second component of 
  $\iota_1$ mapping into $H_1(W)$ to $H_1'(\partial_2 F_0)\oplus \ZZ^{N+1}$. 
  Both restrictions 
  themselves are surjective by theorem \ref{thm:ConnectivitySecondBoundarySurface} 
  and (\ref{eqn:HomologyGroupsWLowDegrees}) 
  and hence also $\iota_1$ is. 

  On the vertical cycles $\ZZ^N$ of $H_1(U\cap W)$ the map $\iota_1$ takes again 
  the same form (\ref{eqn:RepresentingMatrixIota1}) and 
  consequently we can choose a splitting 
  \[
    H_2(Y_\delta) = H_2'(Y_\delta) \oplus \ZZ
  \]
  of the second homology group of $Y_\delta$ with the second summand 
  mapping to the kernel of $\iota_1$ on the vertical cycles. 
  We can construct a generator $[\sigma]$ of the quotient 
  $H_2(Y_\delta)/H_2'(Y_\delta) = \ZZ$ similar to the threefold case. 
  Start with a continous section 
  \[
    l : V' \cup \{\infty\} \to Y_\delta
  \]
  of $L : Y_\delta \to \PP^1$. For surfaces the relative homology 
  class $[l] \in H_2(Y_\delta,L^{-1}(D))$ is not unique, but depends 
  on the choice of $l$. Nevertheless, its preimage 
  $[\sigma] \in H_2(Y_\delta)$ under the map $\pi$ in 
  (\ref{eqn:SittingOverRelativeSequences}) generates the quotient 
  $H_2(Y_\delta)/H_2'(Y_\delta) = \ZZ$ and the composite map 
  $L_* \circ \pi$ is an isomorphism when restricted to the second summand 
  of the splitting $H_2(Y_\delta) = H_2'(Y_\delta) \oplus \ZZ$. 

  Hence again $[\sigma]$ is mapped to the fundamental class of $\PP^1$ by 
  $L_*$. All other cycles in $H_2(Y_\delta)$ can be represented sitting 
  in the preimage of discs or paths in $\PP^1$ and are therefore mapped to 
  zero by $L_*$. This finishes the proof for $n=2$.
\end{proof}

We can now prove the main theorem of this paper. 
\begin{proof}{(of theorem \ref{thm:MainTheorem})}
  Consider a deformation of $(X_0,0)$ with two parameters 
  $(\delta,\varepsilon)$, where the 
  first one $\delta$ is for a generic rank $1$ perturbation and the second one 
  $\varepsilon$ is for a smoothing. For the Tjurina transform $Y_{\delta,0}$ over 
  $X_{\delta,0}$, the fiber over $(\delta,0)$ for $\delta \neq 0$ small enough, 
  the homology groups are described by theorem 
  \ref{thm:SecondHomologyGroupRank1Perturbation}. However according to 
  lemma \ref{lem:YdeltaAtAxisPoint}, there might still be an ICIS of $Y_\delta$ 
  at the axis point. 

  In case $Y_{\delta,0}$ is smooth, its diffeomorphism type does not change as we pass to 
  a smooth fiber $Y_{\delta,\varepsilon}$ for $\delta,\varepsilon \neq 0$. 
  If it was not, its topology changes at most at the axis point $(0,\infty)$, 
  where it is the smoothing of an ICIS. 

  This means that, in the notation above, the local Milnor fiber 
  $Y_{\delta,\varepsilon} \cap \Delta$ of 
  $(Y_{\delta,0},(0,\infty))$ is $3$-connected. Hence all $1$- and $2$-cycles 
  appearing in the proof of theorem \ref{thm:SecondHomologyGroupRank1Perturbation}
  close to $Y^\pitchfork_\infty$ (i.e. representable by cycles in 
  $Y_{\delta,\varepsilon} \cap \Delta$)
  still become homologous to zero 
  in $Y_{\delta,\varepsilon}$ and 
  we can literally repeat all the arguments. 
  The theorem then follows from the isomorphism $Y_{\delta,\varepsilon} \cong 
  X_{\delta,\varepsilon}$.
\end{proof}

\vspace{0.5cm}
In this article we focused on ICMC2 singularities 
$(X_0,0)$ of Cohen-Macaulay type $2$, i.e. isolated 
determinantal singularities 
of type $(3,2,2)$.
The reason is, that 
in this case the worst one can get in the Tjurina transform $(Y_0,V)$
are line singularities. 
We saw that cycles from $(Y_0,V)$ get passed on to the Milnor fiber $X_\varepsilon$ 
and are then sitting over the homology of $\PP^1$ by means of the map $L$ 
associated to the deformed matrix. Computations for explicit examples as e.g. 
\cite{FKZ15}, example 3.5, yield that these phenomena can also be observed for 
ICMC2 singularities defined by bigger matrices. 

Consider as another example the threefold singularity $(X_0,0) \subset (\CC^5,0)$ 
defined by a generic embedding
\[
  A : \CC^5 \hookrightarrow \Mat(5,4;\CC)
\]
of a $5$-dimensional subspace into $\Mat(5,4;\CC)$. 
The Tjurina transform now decomposes as 
\[
  Y_0 = \overline{X_0} \cup (\{0\} \times \PP^3) \subset \CC^5 \times \PP^3,
\]
where $\overline{X_0}$ is the strict transform of $X_0$ and 
$\{0\}\times \PP^3$ is an additional component. The locus 
\[
  S = \overline{X_0} \cap (\{0\} \times \PP^3),
\]
where they meet, is a smooth projective hypersurface of degree $5$, 
so we encounter ``plane singularities'' in the Tjurina transform 
in the sense that the singular locus itself has dimension two!

Nevetheless the induced families 
in the Tjurina transform coming from deformations of $(X_0,0)$ are flat. 
Experimental computations show that the fiber $Y_\delta$ over $\delta \neq 0$ 
for a generic rank $2$ perturbation is already smooth and hence 
diffeomorphic to the Milnor fiber $X_\varepsilon$. The axis of such a deformation 
is a whole projective line $H \subset \PP^3$ and the fiber 
$Y_0\cap L^{-1}(H) = Y_\delta \cap L^{-1}(H)$ of $L$ sitting over it. 
This means that also the fundamental class of $H \cong \PP^1$ is 
passed on to $X_\varepsilon$ and then sitting over the corresponding cycle in 
$\PP^3$. Yet to develop a complete description of the topology of $X_\varepsilon$ 
in the spirit of this paper, we would need to deal with singular loci of 
dimension $2$ and their interplay with the topology of $S$ and the axis --  
a task, which is far more evolved than what has been done in this article. 

Apparently we gave a description in a special case 
of a more general phenomenon which yet remains 
to be explored: The vertical and horizontal vanishing cycles of isolated 
determinantal singularities.

\printbibliography

\end{document}